\numberwithin{equation}{section}
\theoremstyle{plain} 
\newtheorem{theorem}{\bf Theorem}[section]
\newtheorem{lemma}[theorem]{\bf Lemma}
\newtheorem{corollary}[theorem]{\bf Corollary}
\newtheorem{proposition}[theorem]{\bf Proposition}
\theoremstyle{definition} 
\newtheorem{definition}[theorem]{\bf Definition}
\newtheorem{condition}[theorem]{\bf Condition}
\newtheorem{remark}[theorem]{\bf Remark}
\newtheorem{example}[theorem]{\bf Example}
\newcommand{\bt}{\begin{theorem}}
\newcommand{\et}{\end{theorem}}
\newcommand{\bl}{\begin{lemma}}
\newcommand{\el}{\end{lemma}}
\newcommand{\bd}{\begin{definition}}
\newcommand{\ed}{\end{definition}}
\newcommand{\bc}{\begin{corollary}}
\newcommand{\ec}{\end{corollary}}
\newcommand{\bp}{\begin{proof}}
\newcommand{\ep}{\end{proof}}
\newcommand{\bx}{\begin{example}}
\newcommand{\ex}{\end{example}}
\newcommand{\br}{\begin{remark}}
\newcommand{\er}{\end{remark}}
\newcommand{\be}{\begin{equation}}
\newcommand{\ee}{\end{equation}}
\newcommand{\ba}{\begin{align}}
\newcommand{\ea}{\end{align}}
\newcommand{\bn}{\begin{enumerate}}
\newcommand{\en}{\end{enumerate}}
\newcommand{\bcs}{\begin{cases}}
\newcommand{\ecs}{\end{cases}}
\renewcommand{\section}{\@startsection{section}{1}{0mm}
  {-\baselineskip}{0.5\baselineskip}{\bf\leftline}}
\begin{document}

\title[A bijection between support $\tau$-tilting subcategories and $\tau$-cotorsion pairs]{A bijection between support $\tau$-tilting subcategories and $\tau$-cotorsion pairs in extriangulated categories} 

\author[Z. Zhu, J. Wei]{Zhiwei Zhu, Jiaqun Wei$^\ast$}

\address{Institute of Mathematics, School of Mathematical Sciences, Nanjing Normal University, Nanjing, 210023, P.~R.~China}
\email{1985933219@qq.com (Zhu)}

\address{Department of Mathematics, Zhejiang Normal University, Jinhua, 321004, Zhejiang, P.~R.~China}
\email{weijiaqun5479@zjnu.edu.cn (Wei)}


\keywords{Tilting subcategories; cotorsion pairs; Support $\tau$-tilting subcategories; $\tau$-cotorsion pairs; extriangulated categories.}
\thanks{$*$~Corresponding author.}


\begin{abstract}
Let $\mathscr{C}$ be an extriangulated category with enough projectives and injectives. We give a new definition of tilting subcategories of $\mathscr{C}$ and prove it coincides with the definition given in \cite{7}. As applications, we introduce the notions of support $\tau$-tilting subcategories and $\tau$-cotorsion pairs of $\mathscr{C}$. We build a bijection between support $\tau$-tilting subcategories and certain $\tau$-cotorsion pairs. Moreover, this bijection induces a bijection between tilting subcategories and certain cotorsion pairs.
\end{abstract}

\maketitle

\section{Introduction}
Tilting theory is a generation of Morita equivalences and plays an important role in the representation theory of algebra. It originated with the study of reflection functors in \cite{2,1}. The first set of axioms for a titling module was described by Brenner and Butler in \cite{3}. After that, tilted algebras \cite{4} were defined by Happel and Ringel as endomorphism algebras of tilting modules over hereditary algebras. Nowadays, tilting theory has been generalized in many directions. Krause \cite{5} defined tilting objects in exact categories and Sauter \cite{6} defined tilting subcategories in exact categories. Recently, Zhu and Zhuang \cite{7} generalized the definition of tilting subcategories in extriangulated categories.

Extriangulated categories were introduced by Nakaoka and Palu \cite{8}, which share some properties of triangulated categories and exact categories. There are some examples of extriangulated categories such as exact categories and extension-closed subcategories of triangulated categories, while some extriangulated categories may be neither exact nor triangulated categories. Hence, some results of triangulated categories and exact categories can be generalized to extriangulated categories.

Adachi, Iyama and Reiten \cite{9} introduced $\tau$-tilting theory on finite dimensional algebra. One can refer to \cite{10} for more details. Then Iyama, Jørgensen and Yang \cite{11} introduced support $\tau$-tilting subcategories in functor categories. Later on, support $\tau$-tilting subcategories were generalized in Hom-finite abelian categories with enough projectives by Liu and Zhou \cite{12}. Asadollahi, Sadeghi and Treffinger \cite{13} showed that there is a bijection between support $\tau$-tilting subcategories and $\tau$-cotorsion triples, which is a generalization of \cite{14}. Recently, Pan, Zhang and Zhu \cite{15} defined support $\tau$-tilting subcategories in exact categories and gave a bijection between support $\tau$-tilting subcategories and certain $\tau$-cotorsion pairs.

The main result of this paper is to provide a bijection between support $\tau$-tilting subcategories and certain $\tau$-cotorsion pairs in extriangulated categories. Moreover, this bijection induces a bijection between tilting subcategories and certain cotorsion pairs.

The paper is organized as follows. In Section 2, we summarize some definitions and results of extriangulated categories. In Section 3, we give a new definition of tilting subcategories in extriangulated categories and prove it coincides with the definition given in \cite{7}. Later on, we define support $\tau$-tilting subcategories and $\tau$-cotorsion pairs of extriangulated categories with enough projectives and injectives. Finally, we prove the main result.

\section{Preliminaries}
Throughout the article, we assume, unless otherwise stated, that $\mathscr{C}$ denotes an additive category, which is skeletally small and Krull-Schmidt. All subcategories
considered are full and closed under isomorphisms. We denote by $\mathscr{C}(A,B)$ the set of morphisms from $A$ to $B$ in $\mathscr{C}$. The composition of $a\in\mathscr{C}(A,B)$ and $b\in\mathscr{C}(B,C)$ is denoted by $ba$. For a subcategory $\mathscr{A}$ of $\mathscr{C}$, $a\in\mathscr{C}(A,C)$ is a {\em right} $\mathscr{A}$-{\em approximation} for $C\in\mathscr{C}$ if $A\in\mathscr{A}$ and $\mathscr{C}(A',a)$ is surjective for any $A'\in\mathscr{A}$. Dually, we can define {\em left $\mathscr{A}$-approximation}.

\subsection{Extriangulated categories}
Let us recall some notions concerning extriangulated categories from \cite{8}.

Let $\mathbb{E}:\mathscr{C}^{op}\times\mathscr{C}\rightarrow Ab$ be a biadditive functor, where $Ab$ is the category of abelian groups. For any pair of objects $A,C\in\mathscr{C}$, an element $\delta\in\mathbb{E}(C,A)$ is called an
$\mathbb{E}$-{\em extension}. The zero element $0\in\mathbb{E}(C,A)$ is called the {\em split} $\mathbb{E}$-{\em extension}. For any morphism $a\in\mathscr{C}(A,A')$ and $c\in\mathscr{C}(C',C)$, we have the following $\mathbb{E}$-extensions
$$\mathbb{E}(C,a)(\delta)\in\mathbb{E}(C,A'), \,\mathbb{E}(c,A)(\delta)\in\mathbb{E}(C',A),$$
which are denoted by $a_*\delta$ and $c^*\delta$, respectively.
\begin{definition}\label{morphism}
   \cite[Definition 2.3]{8} A morphism $(a,c):\delta\rightarrow \delta'$ of $\mathbb{E}$-extensions $\delta\in\mathbb{E}(C,A)$, $\delta'\in\mathbb{E}(C',A')$ is a pair of morphisms $a\in\mathscr{C}(A,A')$ and $c\in\mathscr{C}(C,C')$ satisfying  $a_*\delta=c^*\delta'$.
\end{definition}
Two sequences of morphisms $A\stackrel{x}{\longrightarrow}B\stackrel{y}{\longrightarrow}C$ and $A\stackrel{x'}{\longrightarrow}B'\stackrel{y'}{\longrightarrow}C$ in $\mathscr{C}$ are said to be {\em equivalent} if there exists an isomorphism $b\in\mathscr{C}(B,B')$ such that the following diagram is commutative.
$$\xymatrix{A \ar[r]^x \ar@{=}[d]& B\ar[r]^y \ar[d]^b_{\simeq}&C\ar@{=}[d]\\
A\ar[r]^{x'}&B'\ar[r]^{y'}&C}$$
We denote the equivalence class of $A\stackrel{x}{\longrightarrow}B\stackrel{y}{\longrightarrow}C$ by $[A\stackrel{x}{\longrightarrow}B\stackrel{y}{\longrightarrow}C]$. In addition, for any $A,C\in\mathscr{C}$, we denote as
$$0=[\xymatrix{A\ar[r]^{\tiny{\begin{pmatrix}1\\0\end{pmatrix}}\quad\quad}&A\oplus B\ar[r]^{\quad\tiny{\begin{pmatrix}0 & 1\end{pmatrix}}}&C}].
$$
For any two equivalence classes $[A\stackrel{x}{\longrightarrow}B\stackrel{y}{\longrightarrow}C]$ and $[A'\stackrel{x'}{\longrightarrow}B'\stackrel{y'}{\longrightarrow}C']$ we denote as
$$[A\stackrel{x}{\longrightarrow}B\stackrel{y}{\longrightarrow}C]\oplus [A'\stackrel{x'}{\longrightarrow}B'\stackrel{y'}{\longrightarrow}C'] = [A\oplus A'\stackrel{\tiny{\begin{pmatrix}
                    x & 0 \\
                    0 & x'
                  \end{pmatrix}}}{\longrightarrow}B\oplus B'\stackrel{\tiny{\begin{pmatrix}
                                      y & 0 \\
                                      0 & y'
                                    \end{pmatrix}}}{\longrightarrow}C\oplus C'].$$
\begin{definition}
  \cite[Definition 2.9]{8} Let $\mathfrak{s}$ be a correspondence, which associates an equivalence class $\mathfrak{s}(\delta)=[A\stackrel{x}{\longrightarrow}B\stackrel{y}{\longrightarrow}C]$ to each $\mathbb{E}$-extension $\delta\in\mathbb{E}(C,A)$. This $\mathfrak{s}$ is called a {\em realization} of $\mathbb{E}$ if for any morphism $(a,c):\delta\rightarrow \delta'$ with $\mathfrak{s}(\delta)=[A\stackrel{x}{\longrightarrow}B\stackrel{y}{\longrightarrow}C]$ and $\mathfrak{s}(\delta')=[A'\stackrel{x'}{\longrightarrow}B'\stackrel{y'}{\longrightarrow}C']$, there is a commutative diagram as follows:
  $$\xymatrix{A \ar[r]^x \ar[d]^a& B\ar[r]^y \ar[d]^b&C\ar[d]^c\\
  A'\ar[r]^{x'}&B'\ar[r]^{y'}&C'}
  $$
  A realization $\mathfrak{s}$ of $\mathbb{E}$ is said to be {\em additive} if the following conditions are satisfied:

  \;(a) For any $A,C\in\mathscr{C}$, the split $\mathbb{E}$-extension $0\in\mathbb{E}(C,A)$ satisfies $\mathfrak{s}(0)=0$.

  \;(b) $\mathfrak{s}(\delta\oplus \delta')=\mathfrak{s}(\delta)\oplus\mathfrak{s}(\delta')$ for any pair of $\mathbb{E}$-extensions $\delta$ and $\delta'$.
\end{definition}

\begin{definition}
  \cite[Definition 2.12]{8} We call the triple $\mathscr{C}=(\mathscr{C},\mathbb{E},\mathfrak{s})$ an extriangulated category if it satisfies the following conditions:

  ${\rm(ET1)}$ $\mathbb{E}:\mathscr{C}^{op}\times\mathscr{C}\rightarrow Ab$ is a biadditive functor.

  ${\rm(ET2)}$ $\mathfrak{s}$ is an additive realization of $\mathbb{E}$.

  ${\rm(ET3)}$ Let $\delta\in\mathbb{E}(C,A)$ and $\delta'\in\mathbb{E}(C',A')$ be any pair of $\mathbb{E}$-extensions, with
  $$\mathfrak{s}(\delta)=[A\stackrel{x}{\longrightarrow}B\stackrel{y}{\longrightarrow}C] \;{\rm and}\;
  \mathfrak{s}(\delta')=[A'\stackrel{x'}{\longrightarrow}B'\stackrel{y'}{\longrightarrow}C'].$$

  \quad\quad\quad For any commutative diagram
  $$\xymatrix{A \ar[r]^x \ar[d]^a& B\ar[r]^y \ar[d]^b&C\\
  A'\ar[r]^{x'}&B'\ar[r]^{y'}&C'}
  $$

  \quad\quad\quad in $\mathscr{C}$, there is a morphism $(a,c):\delta\rightarrow \delta'$ satisfying $cy=y'b$.

  ${\rm(ET3)^{op}}$ Dual of ${\rm(ET3)}$

  ${\rm(ET4)}$ Let $\delta\in\mathbb{E}(D,A)$ and $\delta'\in\mathbb{E}(F,B)$ be $\mathbb{E}$-extensions realized by $A\stackrel{f}{\longrightarrow}B\stackrel{f'}{\longrightarrow}D$

  \quad\quad\quad and $B\stackrel{g}{\longrightarrow}C\stackrel{g'}{\longrightarrow}F$, respectively. Then there exist an object $E\in\mathscr{C}$, a

  \quad\quad\quad commutative diagram
  $$\xymatrix{A \ar[r]^f \ar@{=}[d]& B\ar[r]^{f'}\ar[d]^g&D\ar[d]^d\\
  A\ar[r]^{h}&C\ar[r]^{h'}\ar[d]^{g'}&E\ar[d]^e\\&F\ar@{=}[r]&F}
  $$

  \quad\quad\quad in $\mathscr{C}$, and an $\mathbb{E}$-extension $\delta''\in\mathbb{E}(E,A)$ realized by $A\stackrel{h}{\longrightarrow}C\stackrel{h'}{\longrightarrow}E$, which

 \quad\quad\quad satisfy the following compatibilities:

  \quad\quad\quad (i) $D\stackrel{d}{\longrightarrow}E\stackrel{e}{\longrightarrow}F$ realizes $\mathbb{E}(F,f')(\delta')$,

  \quad\quad\quad (ii) $\mathbb{E}(d,A)(\delta'')=\delta$,

  \quad\quad\quad (iii) $\mathbb{E}(E,f)(\delta'')=\mathbb{E}(e,B)(\delta')$.

  ${\rm(ET4)^{op}}$ Dual of ${\rm(ET4)}$
\end{definition}

\begin{remark}
  (a) A sequence $A\stackrel{x}{\longrightarrow}B\stackrel{y}{\longrightarrow}C$ is called a {\em conflation} if it realizes some $\mathbb{E}$-extension $\delta\in\mathbb{E}(C,A)$. Then $x$ is called an {\em inflation} and $y$ is called a {\em deflation}. We say $A\stackrel{x}{\longrightarrow}B\stackrel{y}{\longrightarrow}C\stackrel{\delta}{\dashrightarrow}$ is an $\mathbb{E}$-{\em triangle}.

  (b) For a given $\mathbb{E}$-triangle $A\stackrel{x}{\longrightarrow}B\stackrel{y}{\longrightarrow}C\stackrel{\delta}{\dashrightarrow}$, we denote $A={\rm cocone}(y)$ and $C={\rm cone}(x)$. An $\mathbb{E}$-triangle is {\em split} if it realizes $0$.

  (c) A subcategory $\mathcal{U}$ of $\mathscr{C}$ is {\em closed under extensions} if for any conflation $A\stackrel{x}{\longrightarrow}B\stackrel{y}{\longrightarrow}C$ with $A,C\in\mathcal{U}$, we have $B\in\mathcal{U}$.

  (d) An object $P$ in $\mathscr{C}$ is {\em projective} if for any conflation $A\stackrel{x}{\longrightarrow}B\stackrel{y}{\longrightarrow}C$, $\mathscr{C}(P,y)$ is surjective. We denote the subcategory of projective objects by $\mathcal{P}(\mathscr{C})$. Dually, we can define {\em injective} objects and the subcategory of injective objects denoted by $\mathcal{I}(\mathscr{C})$. We say that $\mathscr{C}$ {\em has enough projectives} if for any $A\in\mathscr{C}$, there is a deflation $P\rightarrow A$ for some $P\in\mathcal{P}(\mathscr{C})$. Dually, we define that $\mathscr{C}$ {\em has enough injectives}.
\end{remark}

Throughout the article, we assume $\mathscr{C}$ has enough projectives and injectives.

\begin{definition}\label{2.5}
  Let $\mathscr{C}$ be an extriangulated category and $\mathcal{U}$ be a subcategory of $\mathscr{C}$. Then we define ${\rm Defl}(\mathcal{U})$ and ${\rm Infl}(\mathcal{U})$ as $${\rm Defl}(\mathcal{U})=\{F\in\mathscr{C}\;|\;\exists\;{\rm a}\;{\rm deflation}\; U\stackrel{g}{\longrightarrow}F\;{\rm for}\;{\rm some}\;U\in\mathcal{U}\}$$ and $${\rm Infl}(\mathcal{U})=\{S\in\mathscr{C}\;|\;\exists\;{\rm an}\;{\rm inflation}\; S\stackrel{f}{\longrightarrow}U\;{\rm for}\;{\rm some}\;U\in\mathcal{U}\}.$$
  We say $\mathcal{U}$ is {\em closed under deflations} (resp. {\em inflations}), if ${\rm Defl}(\mathcal{U})=\mathcal{U}$ (resp. ${\rm Infl}(\mathcal{U})=\mathcal{U}$).
\end{definition}

\subsection{Exact sequences in extriangulated categories}
In this section, we always assume that $\mathscr{C}$ is an extriangulated category satisfying the following conditions.

\begin{condition}\label{2.6}
  (WIC). \cite[Condition 5.8]{8}
  (1) For any pair of morphisms $f:X\rightarrow Y$ and $g:Y\rightarrow Z$ in $\mathscr{C}$, if $gf$ is an inflation, then so is $f$.

  (2) For any pair of morphisms $f:X\rightarrow Y$ and $g:Y\rightarrow Z$ in $\mathscr{C}$, if $gf$ is a deflation, then so is $g$.
\end{condition}

\begin{definition}
  \cite[Definition 2.9]{16} A sequence $A\stackrel{x}{\longrightarrow}B\stackrel{y}{\longrightarrow}C$ is said to be {\em right exact $\mathbb{E}$-triangle} if there exists an $\mathbb{E}$-triangle $K\stackrel{h_2}{\longrightarrow}B\stackrel{y}{\longrightarrow}C\stackrel{\delta}{\dashrightarrow}$ and a deflation $h_1:A\rightarrow K$ which is compatible, such that $x=h_2h_1$. Dually, we can define {\em left exact $\mathbb{E}$-triangles}.
\end{definition}

A morphism $f$ in $\mathscr{C}$ is called {\em compatible}, if ``$f$ is both an inflation and a deflation'' implies $f$ is an isomorphism.

\begin{lemma}\label{2.8}
  \cite[Lemma 2.10]{16} Let $\eta:A\stackrel{x}{\longrightarrow}B\stackrel{y}{\longrightarrow}C$ be a right exact $\mathbb{E}$-triangle in $\mathscr{C}$. If x is an inflation, then $\eta$ is a conflation. Dually, we have the similar result on left exact $\mathbb{E}$-triangles.
\end{lemma}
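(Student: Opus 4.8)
The plan is to unwind the definition of a right exact $\mathbb{E}$-triangle and show that, once we assume in addition that $x$ is an inflation, the compatible deflation occurring in the definition is forced to be an isomorphism; then $\eta$ is just an $\mathbb{E}$-triangle transported along an isomorphism of its first term, hence a conflation.

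In detail, by definition there are an $\mathbb{E}$-triangle $K\stackrel{h_2}{\longrightarrow}B\stackrel{y}{\longrightarrow}C\stackrel{\delta}{\dashrightarrow}$ and a compatible deflation $h_1\colon A\to K$ with $x=h_2h_1$. Since $h_2$ is an inflation and, by hypothesis, $x=h_2h_1$ is an inflation, Condition \ref{2.6}(1) (the (WIC) condition) forces $h_1$ to be an inflation. But $h_1$ is already a deflation, so its compatibility yields that $h_1$ is an isomorphism.

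It then remains to record the essentially formal fact that if $K\stackrel{h_2}{\longrightarrow}B\stackrel{y}{\longrightarrow}C\stackrel{\delta}{\dashrightarrow}$ is an $\mathbb{E}$-triangle and $h_1\colon A\to K$ is an isomorphism, then $A\stackrel{h_2h_1}{\longrightarrow}B\stackrel{y}{\longrightarrow}C$ is a conflation; concretely it realizes $\mathbb{E}(C,h_1^{-1})(\delta)\in\mathbb{E}(C,A)$. Indeed, $(h_1^{-1},\mathrm{id}_C)$ is a morphism of $\mathbb{E}$-extensions from $\delta$ to $\mathbb{E}(C,h_1^{-1})(\delta)$, since $(h_1^{-1})_\ast\delta=(\mathrm{id}_C)^\ast\bigl(\mathbb{E}(C,h_1^{-1})(\delta)\bigr)$; applying the realization axiom to it produces a commutative diagram comparing $\mathfrak{s}(\delta)=[K\stackrel{h_2}{\longrightarrow}B\stackrel{y}{\longrightarrow}C]$ with $\mathfrak{s}(\mathbb{E}(C,h_1^{-1})(\delta))$, whose middle vertical map is an isomorphism by the short five lemma for extriangulated categories (cf.\ \cite{8}), and a short diagram chase then identifies $\mathfrak{s}(\mathbb{E}(C,h_1^{-1})(\delta))$ with the equivalence class of $A\stackrel{h_2h_1}{\longrightarrow}B\stackrel{y}{\longrightarrow}C$, i.e.\ with $\eta$. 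The left exact case is handled dually: one writes $y=\iota p$ with $A\stackrel{x}{\longrightarrow}B\stackrel{p}{\longrightarrow}L\dashrightarrow$ an $\mathbb{E}$-triangle and $\iota\colon L\to C$ a compatible inflation, applies Condition \ref{2.6}(2) to conclude that $\iota$ is a deflation, hence an isomorphism, and transports the $\mathbb{E}$-triangle along $\iota$.

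I do not expect a real obstacle: the content is the one-line combination of the (WIC) condition with the compatibility of $h_1$. The only step that requires care is the transport along an isomorphism in the last paragraph, where one must respect the composition convention ($ba$ means ``first $a$, then $b$'') and invoke the realization axiom and the five lemma in precisely the right form; that is where I would spend most of the write-up.
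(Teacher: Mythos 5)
Your proposal is correct: applying Condition \ref{2.6}(1) to $x=h_2h_1$ makes $h_1$ an inflation, compatibility then makes it an isomorphism, and transporting $\delta$ along $h_1^{-1}$ (with the realization axiom and \cite[Corollary 3.6]{8} to see the middle map is an isomorphism) exhibits $\eta$ as a conflation, with the dual case handled symmetrically via Condition \ref{2.6}(2). The paper itself gives no proof but simply cites \cite[Lemma 2.10]{16}, and your argument is precisely the standard one behind that cited result, so there is nothing to correct.
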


\begin{corollary}
  \cite[Remark 2.11]{16} A sequence $\eta:A\stackrel{x}{\longrightarrow}B\stackrel{y}{\longrightarrow}C$ is both right exact and left exact if and only if $\eta$ is a conflation.
\end{corollary}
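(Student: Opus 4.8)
The plan is to read this off directly from Lemma~\ref{2.8} together with its dual, so essentially no computation is needed. First I would dispose of the implication that a conflation is both right and left exact. If $\eta$ is a conflation then it realizes some $\delta\in\mathbb{E}(C,A)$, so $A\stackrel{x}{\longrightarrow}B\stackrel{y}{\longrightarrow}C\stackrel{\delta}{\dashrightarrow}$ is an $\mathbb{E}$-triangle; taking this $\mathbb{E}$-triangle with $K=A$, $h_2=x$ and $h_1=\id_A$ witnesses that $\eta$ is right exact, since $\id_A$ is a deflation (it is the deflation of the split $\mathbb{E}$-triangle $0\to A\stackrel{\id}{\longrightarrow}A\dashrightarrow$), it is compatible (being an isomorphism), and $x=h_2h_1$. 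Dually, taking $K'=C$, $h'_2=y$ and $h'_1=\id_C$ shows $\eta$ is left exact.

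For the converse, I would assume $\eta$ is both right exact and left exact. Left exactness provides, by the (dual of the) definition, an $\mathbb{E}$-triangle $A\stackrel{x}{\longrightarrow}B\stackrel{h'_2}{\longrightarrow}K'\dashrightarrow$ together with a compatible inflation $K'\to C$ composing with $h'_2$ to give $y$; in particular $x$ is an inflation, being the inflation of an $\mathbb{E}$-triangle. Then $\eta$ is a right exact $\mathbb{E}$-triangle whose first morphism $x$ is an inflation, so Lemma~\ref{2.8} immediately yields that $\eta$ is a conflation. (Alternatively, right exactness makes $y$ a deflation, and one applies the dual half of Lemma~\ref{2.8}; either route works.)

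I do not anticipate a genuine obstacle here, as the argument is short once Lemma~\ref{2.8} is in hand. The only points worth checking carefully are the bookkeeping that identity morphisms are simultaneously inflations, deflations and compatible morphisms, and that the definition of ``right exact $\mathbb{E}$-triangle'' is dualized correctly so that ``left exact'' really does force the inflation $x$ to exist (equivalently, ``right exact'' forces the deflation $y$); with that observation both directions are immediate.
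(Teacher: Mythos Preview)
Your argument is correct. Both directions are handled properly: for the backward direction you correctly dualize the definition so that left exactness forces $x$ to be an inflation, and then Lemma~\ref{2.8} applies; for the forward direction the identity morphism is indeed a compatible deflation (and dually a compatible inflation), so the trivial factorization works.

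The paper itself does not give a proof of this corollary---it is simply quoted from \cite[Remark~2.11]{16}---so there is nothing to compare your approach against within this text. What you wrote is exactly the natural one-line deduction from Lemma~\ref{2.8} and its dual, and it is presumably what the cited reference has in mind as well.
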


For any object $C\in\mathscr{C}$ , there exist $\mathbb{E}$-triangles $$A\stackrel{x}{\longrightarrow}P\stackrel{y}{\longrightarrow}C\stackrel{\delta}{\dashrightarrow} \;{\rm and}\; C\stackrel{x}{\longrightarrow}I\stackrel{y}{\longrightarrow}A'\stackrel{\delta}{\dashrightarrow}$$
with $P\in\mathcal{P}(\mathscr{C})$ and $I\in\mathcal{I}(\mathscr{C})$. In this case, $A$ is called the {\em syzygy} and $A'$ is called the {\em cosyszygy} of $C$, which are denoted by $\Omega(C)$ and $\Sigma(C)$, respectively.

For any subcategory $\mathcal{U}$ of $\mathscr{C}$, put $\Omega^0\mathcal{U}=\mathcal{U}$, and for $i>0$ we define $\Omega^i\mathcal{U}$ inductively by $\Omega^i(\mathcal{U})=\Omega(\Omega^{i-1}\mathcal{U})$, i.e. the subcategory consisting of syzygies of objects in $\Omega^{i-1}\mathcal{U}$. We call $\Omega^i\mathcal{U}$ the {\em i}-th syzygy of $\mathcal{U}$. Dually, we can define the {\em i}-th cosyzygy of $\mathcal{U}$ denoted by $\Sigma^i\mathcal{U}$ for $i\geqslant0$.

By \cite[Lemma 5.1]{17}, the higher extension group is defined as
$$\mathbb{E}^{i+1}(X,Y)\cong\mathbb{E}(X,\Sigma^iY)\cong\mathbb{E}(\Omega^iX,Y)$$
for any $X,Y\in\mathscr{C}$ and $i\geqslant0$.

\begin{remark}
  For any subcategory $\mathcal{U}$ of $\mathscr{C}$, we define $$\mathcal{U}^{\bot_n}=\{X\in\mathscr{C}~|~\mathbb{E}^n(U,X)=0~\text{for any}~ U\in\mathcal{U}\}$$ for any $n\geqslant1$. Dually, we define $^{\bot_n}\mathcal{U}$. In particular, $$\mathcal{U}^{\bot_0}=\{X\in\mathscr{C}~|~\mathscr{C}(U,X)=0~\text{for any}~U\in\mathcal{U}\},$$ $$^{\bot_0}\mathcal{U}=\{X\in\mathscr{C}~|~\mathscr{C}(X,U)=0~\text{for any}~U\in\mathcal{U}\}.$$
\end{remark}

The following result is well-known in \cite{17} and we will use it frequently in the following.

\begin{proposition}\label{2.13}
   \cite[Proposition 5.2]{17} Let $A\stackrel{x}{\longrightarrow}B\stackrel{y}{\longrightarrow}C\stackrel{\delta}{\dashrightarrow}$ be an $\mathbb{E}$-triangle. There exist long exact sequences $$\ldots\longrightarrow \mathbb{E}^i(X,A)\longrightarrow\mathbb{E}^i(X,B)\longrightarrow\mathbb{E}^i(X,C)$$
  $$\,\,\,\longrightarrow\mathbb{E}^{i+1}(X,A)\longrightarrow\mathbb{E}^{i+1}(X,B)\longrightarrow\ldots $$ and $$\ldots\longrightarrow \mathbb{E}^i(C,X)\longrightarrow\mathbb{E}^i(B,X)\longrightarrow\mathbb{E}^i(A,X)$$
  $$\,\,\,\longrightarrow\mathbb{E}^{i+1}(C,X)\longrightarrow\mathbb{E}^{i+1}(B,X)\longrightarrow\ldots $$ for any objects $X\in\mathscr{C}$ and $i\geqslant0$.
\end{proposition}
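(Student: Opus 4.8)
The plan is to bootstrap from the six-term exact sequence of \cite[Corollary 3.12]{8} and then shift degrees by means of the identifications $\mathbb{E}^{i+1}(X,Y)\cong\mathbb{E}(X,\Sigma^iY)\cong\mathbb{E}(\Omega^iX,Y)$ recalled above, with the convention $\mathbb{E}^0(-,-):=\mathscr{C}(-,-)$. I will establish the first long exact sequence; the second one is the first one applied to the opposite extriangulated category $\mathscr{C}^{op}=(\mathscr{C}^{op},\mathbb{E}^{op},\mathfrak{s}^{op})$, in which $\mathcal{P}(\mathscr{C})$ and $\mathcal{I}(\mathscr{C})$, and $\Omega$ and $\Sigma$, trade roles. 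For the base case, apply the functors $\mathscr{C}(X,-)$ and $\mathbb{E}(X,-)$ to the $\mathbb{E}$-triangle $A\stackrel{x}{\longrightarrow}B\stackrel{y}{\longrightarrow}C\stackrel{\delta}{\dashrightarrow}$; by \cite[Corollary 3.12]{8} this produces the exact sequence
$$\mathscr{C}(X,A)\longrightarrow\mathscr{C}(X,B)\longrightarrow\mathscr{C}(X,C)\stackrel{\delta_\sharp}{\longrightarrow}\mathbb{E}(X,A)\longrightarrow\mathbb{E}(X,B)\longrightarrow\mathbb{E}(X,C),$$
which is the asserted sequence through the term $\mathbb{E}(X,B)$; exactness at $\mathbb{E}(X,C)$ and at all higher terms will come from the connecting maps built next.

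For the degree shift, first note that, since $\mathbb{E}^{i+1}(X,-)\cong\mathbb{E}(\Omega^iX,-)$, applying $\mathscr{C}(\Omega^iX,-)$ and $\mathbb{E}(\Omega^iX,-)$ to $A\to B\to C\dashrightarrow$ and invoking \cite[Corollary 3.12]{8} already gives exactness at every term of the form $\mathbb{E}^n(X,B)$. Hence only the connecting maps $\mathbb{E}^i(X,C)\to\mathbb{E}^{i+1}(X,A)$ for $i\geqslant1$ and exactness at the $C$- and $A$-terms remain to be supplied. Fix, for each $j\geqslant1$, an $\mathbb{E}$-triangle $\Omega^jX\stackrel{p_j}{\longrightarrow}P_j\stackrel{q_j}{\longrightarrow}\Omega^{j-1}X\stackrel{\varepsilon_j}{\dashrightarrow}$ with $P_j\in\mathcal{P}(\mathscr{C})$. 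Because every $\mathbb{E}$-triangle whose third term is projective splits, $\mathbb{E}(P_j,Y)=0$ for all $Y$, so \cite[Corollary 3.12]{8} applied to $\varepsilon_j$ shows that the connecting map $\varepsilon_j^\sharp\colon\mathscr{C}(\Omega^jX,Y)\to\mathbb{E}(\Omega^{j-1}X,Y)\cong\mathbb{E}^j(X,Y)$ is surjective, with kernel the subgroup of morphisms that factor through $p_j$. I then define the connecting map $\mathbb{E}^i(X,C)\to\mathbb{E}^{i+1}(X,A)$ by lifting a class along $\varepsilon_i^\sharp$ to some $g\in\mathscr{C}(\Omega^iX,C)$ and sending it to $g^*\delta\in\mathbb{E}(\Omega^iX,A)\cong\mathbb{E}^{i+1}(X,A)$, where $\delta_\sharp(g)=g^*\delta$. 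This is independent of the lift: if $g=hp_i$ with $h\colon P_i\to C$, then $g^*\delta=p_i^*(h^*\delta)$ lies in $p_i^*(\mathbb{E}(P_i,A))=0$.

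It remains to prove exactness at the terms $\mathbb{E}^i(X,C)$ and $\mathbb{E}^{i+1}(X,A)$ for $i\geqslant1$, which I would carry out by a diagram chase in the commutative grid built from the two $\mathbb{E}$-triangles $A\to B\to C\dashrightarrow$ and $\varepsilon_i\colon\Omega^iX\to P_i\to\Omega^{i-1}X\dashrightarrow$, using at each node the six-term sequences of \cite[Corollary 3.12]{8} in both variables and the surjectivity of $\varepsilon_i^\sharp$. This is the step I expect to be the main obstacle: one must check that the connecting maps just defined are compatible with the maps $x_*,y_*$ supplied by \cite[Corollary 3.12]{8}, so that the assembled sequence is exact and not merely a complex, and one must confirm that the construction does not depend on the auxiliary syzygy triangles $\varepsilon_j$ — the latter being harmless since two choices of $\Omega^jX$ differ only by a projective direct summand, on which $\mathbb{E}(-,Y)$ vanishes. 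Granting these compatibilities, an induction on $i$ assembles the first long exact sequence, and the dual argument — equivalently, the same argument in $\mathscr{C}^{op}$, now using cosyzygy triangles $\Sigma^{j-1}X\to I_j\to\Sigma^jX\dashrightarrow$ with $I_j\in\mathcal{I}(\mathscr{C})$ and the vanishing $\mathbb{E}(Y,I_j)=0$ — delivers the second.
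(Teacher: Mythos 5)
The paper does not actually prove this proposition: it is imported verbatim from \cite[Proposition 5.2]{17}, so the only meaningful comparison is with the standard dimension-shifting argument, and that is precisely what you give — model $\mathbb{E}^{i+1}(X,-)$ by $\mathbb{E}(\Omega^iX,-)$, read off exactness at the Hom- and $B$-terms from the six-term sequences of \cite[Corollary 3.12]{8}, and define the connecting maps by lifting along the surjections $\varepsilon_i^\sharp$ furnished by the syzygy triangles. Your construction and well-definedness check are correct, and the step you flag as the main obstacle is in fact immediate from the same six-term sequences; no further grid or compatibility check is needed. For exactness at $\mathbb{E}^i(X,C)\cong\mathbb{E}(\Omega^{i-1}X,C)$: given $\eta\in\mathbb{E}(\Omega^{i-1}X,B)$, surjectivity of $\varepsilon_i^\sharp$ on $\mathscr{C}(\Omega^iX,B)$ gives $b$ with $b_*\varepsilon_i=\eta$, so $yb$ is a lift of $y_*\eta$ and your connecting map sends it to $(yb)^*\delta=b^*(y^*\delta)=0$ (the identity $y^*\delta=0$, like $x_*\delta=0$, is itself a consequence of \cite[Corollary 3.12]{8} applied to the triangle); conversely, if a lift $g$ of $\theta$ satisfies $g^*\delta=0$, then exactness of $\mathscr{C}(\Omega^iX,B)\rightarrow\mathscr{C}(\Omega^iX,C)\rightarrow\mathbb{E}(\Omega^iX,A)$ yields $g=yb$, whence $\theta=g_*\varepsilon_i=y_*(b_*\varepsilon_i)$. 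For exactness at $\mathbb{E}^{i+1}(X,A)\cong\mathbb{E}(\Omega^iX,A)$: the composite with $x_*$ vanishes since $x_*(g^*\delta)=g^*(x_*\delta)=0$, and if $x_*\alpha=0$ then exactness of $\mathscr{C}(\Omega^iX,C)\rightarrow\mathbb{E}(\Omega^iX,A)\rightarrow\mathbb{E}(\Omega^iX,B)$ gives $g$ with $g^*\delta=\alpha$, and $g$ is a legitimate lift of $g_*\varepsilon_i$, so $\alpha$ lies in the image of your connecting map by lift-independence. With these two observations your induction closes, the independence of the auxiliary syzygy choices is, as you note, covered by the natural isomorphisms $\mathbb{E}^{i+1}(X,Y)\cong\mathbb{E}(\Omega^iX,Y)\cong\mathbb{E}(X,\Sigma^iY)$ of \cite[Lemma 5.1]{17}, and the second sequence follows by the dual (opposite-category) argument. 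So your proposal is correct and coincides with the standard proof behind the cited result; the deferred verification is routine rather than a genuine obstacle.
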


Let's recall some definitions in \cite{7}. An {\em $\mathbb{E}$-sequence} in $\mathscr{C}$ is defined as a sequence $$\ldots\longrightarrow X_{n-1}\stackrel{d_{n-1}}{\longrightarrow}X_{n}\stackrel{d_n}{\longrightarrow}X_{n+1}\longrightarrow\ldots$$ such that for any $n$, there are $\mathbb{E}$-triangles $$K_{n}\stackrel{g_n}{\longrightarrow}X_n\stackrel{f_n}{\longrightarrow}K_{n+1}\stackrel{\delta_n}{\dashrightarrow}$$ in $\mathscr{C}$ and the differential $d_n=g_{n+1}f_n$.

For any $A\in \mathscr{C}$, $A$ admits an $\mathbb{E}$-sequence  $$\ldots\longrightarrow P_{n}\stackrel{d_{n}}{\longrightarrow}\ldots\longrightarrow P_{1}\stackrel{d_1}{\longrightarrow}P_{0}\stackrel{d_0}{\longrightarrow}A$$ with $P_i\in\mathcal{P}(\mathscr{C})$, which is called a {\em projective resolution} of $A$. We define the {\em projective dimension} of $A$, denoted by ${\rm pd}(A)$, to be the minimal length of all projective resolutions of $A$. If all projective resolutions of $A$ are of infinite length, we set ${\rm pd}(A)=+\infty$. For any subcategory $\mathcal{U}\subseteq\mathscr{C}$, we define ${\rm pd}(\mathcal{U})={\rm sup}\{{\rm pd}(U)\;|\;U\in\mathcal{U}\}$. Dually, we can define the {\em injective resolution} and the {\em injective dimension} of $A\in \mathscr{C}$ which is denoted by ${\rm id}(A)$. Similarly, we can define ${\rm id}(\mathcal{U})$.

\begin{lemma}\label{2.14}
  \cite[Lemma 3]{7} For any $A\in \mathscr{C}$, the following statements are equivalent:

  \quad{\rm (1)} ${\rm pd}(A)\leqslant n$;

  \quad{\rm (2)} $\mathbb{E}^{n+1}(A,X)$ for any $X\in\mathscr{C}$;

  \quad{\rm (3)} $\mathbb{E}^{n+i}(A,X)$ for any $X\in\mathscr{C}$ and $i\geqslant 1$.
\end{lemma}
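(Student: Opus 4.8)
The plan is to prove the cycle of implications $(1)\Rightarrow(3)\Rightarrow(2)\Rightarrow(1)$, since $(3)\Rightarrow(2)$ is trivial (take $i=1$). The main tool throughout is the dimension shifting provided by the isomorphism $\mathbb{E}^{i+1}(X,Y)\cong\mathbb{E}(\Omega^iX,Y)$ together with the long exact sequence in Proposition \ref{2.13}.

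For $(1)\Rightarrow(3)$: assume $\mathrm{pd}(A)\leqslant n$, so $A$ admits a projective resolution $0\to P_n\to\cdots\to P_1\to P_0\to A$ of length at most $n$, realized by an $\mathbb{E}$-sequence with syzygies $K_0=A$, $K_1=\Omega(A)$, \dots, $K_n=\Omega^n(A)$, and with $K_n=P_n$ projective (or the resolution terminates earlier, which only helps). Using $\mathbb{E}^{n+i}(A,X)\cong\mathbb{E}^i(\Omega^n A,X)=\mathbb{E}^i(K_n,X)$ for all $i\geqslant1$, it suffices to show $\mathbb{E}^i(P_n,X)=0$ for a projective object $P_n$ and all $i\geqslant1$. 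For $i=1$ this is immediate from the defining property of projectives (every $\mathbb{E}$-triangle ending at $P_n$ splits, so $\mathbb{E}(P_n,X)=0$), and for $i\geqslant2$ one writes $\mathbb{E}^i(P_n,X)\cong\mathbb{E}(\Omega^{i-1}P_n,X)$; since $P_n$ is projective its syzygy is $0$, hence $\Omega^{i-1}P_n=0$ and the group vanishes. (Alternatively one argues directly by induction on $n$, peeling off one syzygy at a time via Proposition \ref{2.13}.)

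For $(2)\Rightarrow(1)$: assume $\mathbb{E}^{n+1}(A,X)=0$ for every $X\in\mathscr{C}$. Build a projective resolution of $A$ using that $\mathscr{C}$ has enough projectives, producing $\mathbb{E}$-triangles $\Omega^{i+1}(A)\to P_i\to \Omega^i(A)\dashrightarrow$ with $P_i\in\mathcal{P}(\mathscr{C})$ and $\Omega^0(A)=A$. Set $K=\Omega^n(A)$. The hypothesis together with dimension shifting gives $\mathbb{E}(K,X)\cong\mathbb{E}^{n+1}(A,X)=0$ for all $X$, so in the $\mathbb{E}$-triangle $\Omega(K)\stackrel{x}{\to}P_{n-1}\stackrel{y}{\to}K\stackrel{\delta}{\dashrightarrow}$ defining $K$ as the $n$-th syzygy, the extension $\delta\in\mathbb{E}(K,\Omega(K))$ is zero, i.e.\ the $\mathbb{E}$-triangle splits. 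Hence $K$ is a direct summand of the projective object $P_{n-1}$, and since $\mathcal{P}(\mathscr{C})$ is closed under direct summands (it is the subcategory of projectives in a Krull--Schmidt category), $K=\Omega^n(A)\in\mathcal{P}(\mathscr{C})$. This yields a projective resolution $0\to K\to P_{n-1}\to\cdots\to P_0\to A$ of length at most $n$, so $\mathrm{pd}(A)\leqslant n$.

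The step requiring the most care is $(2)\Rightarrow(1)$: one must be careful that the vanishing of a single higher extension group forces a genuine splitting (not merely the vanishing of the group in degree one on an abstract syzygy that need not be a well-defined object), which is why the argument is pinned to a concrete projective resolution built from ``enough projectives'' and why closure of $\mathcal{P}(\mathscr{C})$ under summands is used. One should also note that syzygies are only well-defined up to projective summands, so the statements ``$K\in\mathcal{P}(\mathscr{C})$'' and ``$\mathrm{pd}(A)\leqslant n$'' are being read as statements about the existence of a suitable resolution; this is harmless but worth stating explicitly. Everything else is a routine application of Proposition \ref{2.13} and the higher-extension isomorphism.
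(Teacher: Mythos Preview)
The paper does not give its own proof of Lemma~\ref{2.14}; it is quoted verbatim from \cite[Lemma~3]{7} and left unproved. So there is nothing to compare against, and your proposal should simply be judged on its own.

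Your argument is the standard one and is essentially correct. One small slip: with your indexing convention $\Omega^{i+1}(A)\to P_i\to\Omega^i(A)\dashrightarrow$, the triangle you use to split off $K=\Omega^n(A)$ should read $\Omega(K)\to P_{n}\to K\dashrightarrow$, not $P_{n-1}$; the resolution you then write down, $0\to K\to P_{n-1}\to\cdots\to P_0\to A$, is correct, but it uses the \emph{other} triangle $K\to P_{n-1}\to\Omega^{n-1}(A)$. Also, you do not need Krull--Schmidt to conclude that a direct summand of a projective is projective; this holds in any additive category from the definition of projectivity. With those two cosmetic fixes the proof is fine.
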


The dual results also hold for id($A$).

\section{Tilting subcategories and torsion theory}

The tilting subcategories of extriangulated categories were introduced in \cite{7}. In this section, we give another definition, which is much more common, and prove the two definitions coincide with each other. As a generalization, we introduce the notion of support $\tau$-tilting subcategories in extriangulated categories. Besides, we define $\tau$-cotorsion pairs in extriangulated categories and give some propositions.

We still assume that $\mathscr{C}$ is an extriangulated category satisfying Condition \ref{2.6} in what follows.

\subsection{Tilting subcategories in extriangulated categories}
In this section, we assume all subcategories of $\mathscr{C}$ closed under direct summands and finite direct sum.

\begin{definition}\label{3.1}
  \cite[Definition 7]{7} Let $\mathcal{T}$ be a subcategory of $\mathscr{C}$. Then we call $\mathcal{T}$ a {\em tilting subcategory} of $\mathscr{C}$ if it satisfies the following conditions:

  (T1) ${\rm pd}(\mathcal{T})\leqslant 1$.

  (T2) $\mathcal{T}$ is a generator of $\mathcal{T}^{\bot_1}$.
\end{definition}

Note that a subcategory $\mathcal{U}\subseteq\mathscr{C}$ is a generator for another subcategory $\mathcal{V}\subseteq\mathscr{C}$ if $\mathcal{U}\subseteq\mathcal{V}$ and for any $V\in\mathcal{V}$, there is a conflation $V'\stackrel{x}{\longrightarrow}U\stackrel{y}{\longrightarrow}V
$ with $U\in\mathcal{U},V'\in\mathcal{V}$.
\vskip 10pt
Let us give another definition of tilting subcategories, which is more common.

\begin{definition}\label{3.2}
  Let $\mathcal{T}$ be a subcategory of $\mathscr{C}$. Then $\mathcal{T}$ is called a tilting subcategory of $\mathscr{C}$ if it satisfies the following conditions:

  (T1) ${\rm pd}(\mathcal{T})\leqslant 1$.

  (T2$'$) $\mathbb{E}(\mathcal{T},\mathcal{T})=0$.

  (T3) For any $P\in\mathcal{P}(\mathscr{C})$, there is a conflation

   $$P\longrightarrow T_0\longrightarrow T_1$$ \quad\quad\quad with $T_0,T_1\in\mathcal{T}$.
\end{definition}

\begin{theorem}\label{3.6}
  The definitions of tilting subcategories given in Definition \ref{3.1} and Definition \ref{3.2} are equivalent.
\end{theorem}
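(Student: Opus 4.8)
The proof proceeds by showing each definition implies the other. Both share (T1), so the work is entirely in relating (T2)--(T3) with (T2$'$). I will prove the two directions separately.

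\emph{From Definition \ref{3.1} to Definition \ref{3.2}.} Assume $\mathcal{T}$ is tilting in the sense of Definition \ref{3.1}. For (T2$'$): since $\mathcal{T}$ is a generator of $\mathcal{T}^{\bot_1}$, in particular $\mathcal{T}\subseteq\mathcal{T}^{\bot_1}$, which says exactly $\mathbb{E}(\mathcal{T},\mathcal{T})=\mathbb{E}^1(\mathcal{T},\mathcal{T})=0$. For (T3): take $P\in\mathcal{P}(\mathscr{C})$. Then $\mathbb{E}^1(\mathcal{T},P)=0$ (projectives kill all extensions), and by Lemma \ref{2.14} together with (T1), $\mathbb{E}^i(\mathcal{T},P)=0$ for all $i\geqslant 1$; hence $P\in\mathcal{T}^{\bot_1}$ (and even in $\mathcal{T}^{\bot_i}$ for all $i\geqslant1$). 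Since $\mathcal{T}$ is a generator of $\mathcal{T}^{\bot_1}$, there is a conflation $P'\stackrel{}{\longrightarrow}T_0\stackrel{}{\longrightarrow}P$ with $T_0\in\mathcal{T}$, $P'\in\mathcal{T}^{\bot_1}$. I then need to rotate this to a conflation $P\to T_0'\to T_1'$ with both terms in $\mathcal{T}$; the natural move is to take a deflation $P'' \to P'$ with $P''$ projective and use (ET4) (octahedral axiom) to splice, reducing to showing $\mathrm{pd}(P')\leqslant 1$ implies $P'$ itself sits in a conflation with $\mathcal{T}$-terms — but the cleaner route is: apply the generator property repeatedly, or directly observe that from the projective resolution of $P'$ and (T1) one builds the desired conflation. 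In fact the quickest argument: $P'\in\mathcal{T}^{\bot_1}$ and the generator property gives a conflation with $\mathcal{T}$-cokernel, and since $\mathrm{pd}(\mathcal{T})\leqslant1$ one checks the kernel is again projective (using the long exact sequence of Proposition \ref{2.13}), closing the induction. This kernel-is-projective verification is the first real technical point.

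\emph{From Definition \ref{3.2} to Definition \ref{3.1}.} Assume (T1), (T2$'$), (T3). We must show $\mathcal{T}$ is a generator of $\mathcal{T}^{\bot_1}$, i.e. $\mathcal{T}\subseteq\mathcal{T}^{\bot_1}$ and every $X\in\mathcal{T}^{\bot_1}$ fits in a conflation $X'\to T\to X$ with $T\in\mathcal{T}$, $X'\in\mathcal{T}^{\bot_1}$. The inclusion $\mathcal{T}\subseteq\mathcal{T}^{\bot_1}$ is just (T2$'$). For the generation: given $X\in\mathcal{T}^{\bot_1}$, take a deflation $P\stackrel{p}{\longrightarrow}X$ with $P\in\mathcal{P}(\mathscr{C})$, giving an $\mathbb{E}$-triangle $Y\stackrel{}{\longrightarrow}P\stackrel{p}{\longrightarrow}X\stackrel{\delta}{\dashrightarrow}$. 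Apply (T3) to $P$ to get a conflation $P\stackrel{}{\longrightarrow}T_0\stackrel{}{\longrightarrow}T_1$ with $T_0,T_1\in\mathcal{T}$. Now use (ET4)$^{\mathrm{op}}$ (or (ET4)) on these two $\mathbb{E}$-triangles sharing the object $P$ to produce an object $Z$ and a commutative $3\times3$-type diagram; this yields a conflation $Y\stackrel{}{\longrightarrow}T_0\stackrel{}{\longrightarrow}Z$ together with a conflation $X\stackrel{}{\longrightarrow}Z\stackrel{}{\longrightarrow}T_1$. Since $X,T_1\in\mathcal{T}^{\bot_1}$ (the latter by (T2$'$) applied with (T1) and Lemma \ref{2.14}, giving $\mathbb{E}^i(\mathcal{T},T_1)=0$), the long exact sequence of Proposition \ref{2.13} shows $Z\in\mathcal{T}^{\bot_1}$ as well — because $\mathrm{pd}(\mathcal{T})\leqslant1$ makes the relevant $\mathbb{E}^2$-terms vanish. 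Then rotating $X\to Z\to T_1$ is not quite what we want; instead I reorganize: the conflation we actually need is obtained by observing $T_0\in\mathcal{T}$ surjects appropriately. The correct final step is to take the conflation $X\stackrel{}{\longrightarrow}Z\stackrel{}{\longrightarrow}T_1$, note $\mathbb{E}(T_1,X)=0$ since $X\in\mathcal{T}^{\bot_1}$, so it splits, $Z\cong X\oplus T_1$; feeding this back, $Y\to T_0\to X\oplus T_1$ splits off $T_1$ to give a conflation $Y\to T_0'\to X$ with $T_0'\in\mathcal{T}$ (a direct summand of $T_0$, hence in $\mathcal{T}$ by the standing closure assumption), and finally $Y\in\mathcal{T}^{\bot_1}$ follows from the long exact sequence since $X,T_0'\in\mathcal{T}^{\bot_1}$ and $\mathrm{pd}(\mathcal{T})\leqslant1$ kills $\mathbb{E}^2(\mathcal{T},-)$. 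This exhibits $\mathcal{T}$ as a generator of $\mathcal{T}^{\bot_1}$.

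\emph{Main obstacle.} The delicate part is the bookkeeping with (ET4)/(ET4)$^{\mathrm{op}}$: producing the intermediate object $Z$ with the two compatible conflations and then correctly identifying which extension splits. One must be careful that the splitting argument ($\mathbb{E}(T_1,X)=0\Rightarrow Z\cong X\oplus T_1$) is applied to the right conflation and that the resulting summand of $T_0$ genuinely lies in $\mathcal{T}$ (this is where ``closed under direct summands'' is used). Throughout, the repeated move ``$\mathrm{pd}(\mathcal{T})\leqslant1$ plus Lemma \ref{2.14} forces $\mathbb{E}^{\geqslant2}(\mathcal{T},-)=0$, so a three-term long exact sequence collapses'' is what makes $\bot_1$ behave like a genuine orthogonal class stable under the relevant cones and cocones; getting every index right in Proposition \ref{2.13} is the routine-but-error-prone core of the argument.
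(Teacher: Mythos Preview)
There is a genuine gap in your direction (T1)+(T2$'$)+(T3) $\Rightarrow$ (T2). After obtaining via (ET4) the conflations $Y\to T_0\to Z$ and $X\to Z\to T_1$ and the splitting $Z\cong X\oplus T_1$, you assert that one can ``split off $T_1$'' to get $Y\to T_0'\to X$ with $T_0'$ a summand of $T_0$. This fails: such a splitting would require the section $s\colon T_1\to Z$ to lift through $T_0\to Z$, i.e.\ $s^*\delta=0$ in $\mathbb{E}(T_1,Y)$ where $\delta$ is the class of $Y\to T_0\to Z$; but the (ET4) compatibility $f_*(s^*\delta)=\delta_2$ (the class of $P\to T_0\to T_1$) forces $s^*\delta\neq 0$ whenever that conflation is non-split. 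What one actually obtains is a conflation $K\to T_0\to X$ with $K$ sitting in $Y\to K\to T_1$, and proving $K\in\mathcal{T}^{\bot_1}$ would need $\mathbb{E}(\mathcal{T},Y)=0$, which is not available after a single projective cover of $X$. The paper circumvents exactly this obstruction: it first proves $\mathbb{E}(\mathcal{T},{\rm Defl}\,\mathcal{T})=0$, then takes \emph{two} steps of projective resolution of $X$; the extra step produces an object $E\in{\rm Defl}\,\mathcal{T}\subseteq\mathcal{T}^{\bot_1}$, and several further pushouts along (T3)-conflations eventually yield $M'\to T'\to X$ with $T'\in\mathcal{T}$ and $M'\in\mathcal{T}^{\bot_1}$.

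Your other direction also contains an error: the claim ``$\mathbb{E}^1(\mathcal{T},P)=0$ (projectives kill all extensions)'' confuses left and right orthogonals --- projectivity gives $\mathbb{E}(P,-)=0$, not $\mathbb{E}(-,P)=0$ --- so $P\in\mathcal{T}^{\bot_1}$ is unjustified. Even granting it, the generator property produces a conflation \emph{ending} at $P$, while (T3) demands one \emph{starting} at $P$; your ``rotation'' is not carried out. The paper does not argue this direction but defers to \cite[Remark~2]{7}.
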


\begin{proof}
  By \cite[Remark 2]{7}, the conditions in Definition \ref{3.1} clearly induce the conditions in Definition \ref{3.2}.

  On the contrary, it suffices to prove (T2). For any $F\in{\rm Defl}\mathcal{T}$, there is a conflation $$A \longrightarrow T_F \longrightarrow F$$ with $T_F\in\mathcal{T}$ and $A\in\mathscr{C}$. Then we get a long exact sequence
  $$\ldots\longrightarrow\mathbb{E}(T,T_F)\longrightarrow \mathbb{E}(T,F)\longrightarrow\mathbb{E}^2(T,A){\longrightarrow}\ldots$$
  for any $T\in \mathcal{T}$ by Proposition \ref{2.13}. Since ${\rm pd}(\mathcal{T})\leqslant 1$, we have $\mathbb{E}^{2}(T,A)=0$. Moreover, by
 condition (T2$'$), we have $\mathbb{E}(T,T_F)=0$. Hence, we have $\mathbb{E}(\mathcal{T},{\rm Defl}\mathcal{T})=0$. Since $\mathscr{C}$ has enough projectives, there exist conflations
  $$
  Y\longrightarrow P_0\longrightarrow X,\;Z\longrightarrow P_1\longrightarrow Y
  $$
  with $P_0,P_1\in\mathcal{P}(\mathscr{C})$ for any $X\in \mathcal{T}^{\bot_1}$. Then there exists a commutative diagram of conflations
  $$
  \xymatrix{Z \ar[r] \ar@{=}[d]& P_1\ar[r]\ar[d]&Y\ar[d]\\
  Z\ar[r]&T^0\ar[r]\ar[d]&E\ar[d]\\
  &T^1\ar@{=}[r]&T^1}
  $$
  with $T^0,T^1\in \mathcal{T}$. By the duality of \cite[Proposition 3.15]{8}, we have a commutative diagram of conflations
  $$\xymatrix{Y \ar[r] \ar[d]& P_0\ar[r]\ar[d]&X\ar@{=}[d]\\
  E\ar[r]\ar[d]&M\ar[r]\ar[d]&X\\
  T^1\ar@{=}[r]&T^1&.}
  $$
  Since $E\in {\rm Defl}\mathcal{T}\subseteq \mathcal{T}^{\bot_1}$ and $X\in \mathcal{T}^{\bot_1}$, we have $M\in \mathcal{T}^{\bot_1}$ by Proposition \ref{2.13}. There exists a commutative diagram of conflations
  $$\xymatrix{P_0 \ar[r] \ar[d]& M\ar[r]\ar[d]&T^1\ar@{=}[d]\\
  T'^0\ar[r]\ar[d]&T'\ar[r]\ar[d]&T^1\\
  T'^1\ar@{=}[r]&T'^1}
  $$
  with $T'^0,T'^1\in \mathcal{T}$. The second row is split, since $\mathbb{E}(\mathcal{T},\mathcal{T})=0$. Then $T'\in\mathcal{T}$. We have a commutative diagram of conflations
  $$\xymatrix{E \ar[r] \ar@{=}[d]& M\ar[r]\ar[d]&X\ar[d]\\
  E\ar[r]&T'\ar[r]\ar[d]&L\ar[d]\\
  &T'^1\ar@{=}[r]&T'^1.}
  $$
  The third column is split, since $X\in \mathcal{T}^{\bot_1}$. Noting that $T'^1\longrightarrow
  L\longrightarrow X$ is also a conflation, we have a commutative diagram of conflations
  $$\xymatrix{E \ar[r] \ar@{=}[d]& M'\ar[r]\ar[d]&T'^1\ar[d]\\
  E\ar[r]&T'\ar[r]\ar[d]&L\ar[d]\\
  &X\ar@{=}[r]&X.}
  $$
  Clearly, $M'\in \mathcal{T}^{\bot_1}$. So $\mathcal{T}$ is a generator of $\mathcal{T}^{\bot_1}$. Hence, we complete the proof.
\end{proof}

Suggested by the referee, one may define another definition for tilting subcategory, as the following result shows.

\begin{theorem}
  (A criterion for tilting subcategories)Let $\mathscr{C}$ be locally small and $\mathcal{T}$ a subcategory of $\mathscr{C}$. Then $\mathcal{T}$ is a tilting subcategory if and only if ${\rm Defl}\mathcal{T}=\mathcal{T}^{\bot_1}$.
\end{theorem}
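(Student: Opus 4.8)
The plan is to establish both inclusions between $\mathrm{Defl}\,\mathcal{T}$ and $\mathcal{T}^{\bot_1}$ under the hypothesis that $\mathcal{T}$ is tilting, and conversely to recover (T1), (T2$'$), (T3) from the identity $\mathrm{Defl}\,\mathcal{T}=\mathcal{T}^{\bot_1}$.

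First, suppose $\mathcal{T}$ is a tilting subcategory. For the inclusion $\mathrm{Defl}\,\mathcal{T}\subseteq\mathcal{T}^{\bot_1}$, take $F\in\mathrm{Defl}\,\mathcal{T}$ and a conflation $A\longrightarrow T_F\longrightarrow F$ with $T_F\in\mathcal{T}$; applying Proposition \ref{2.13} one gets an exact piece $\mathbb{E}(T,T_F)\to\mathbb{E}(T,F)\to\mathbb{E}^2(T,A)$, and $\mathbb{E}(T,T_F)=0$ by (T2$'$) while $\mathbb{E}^2(T,A)=0$ by (T1) and Lemma \ref{2.14}; hence $\mathbb{E}(T,F)=0$ for all $T\in\mathcal{T}$, i.e. $F\in\mathcal{T}^{\bot_1}$. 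This is precisely the computation already carried out in the proof of Theorem \ref{3.6}. For the reverse inclusion $\mathcal{T}^{\bot_1}\subseteq\mathrm{Defl}\,\mathcal{T}$: by Theorem \ref{3.6}, $\mathcal{T}$ is a generator of $\mathcal{T}^{\bot_1}$, so for any $X\in\mathcal{T}^{\bot_1}$ there is a conflation $V'\longrightarrow T\longrightarrow X$ with $T\in\mathcal{T}$; this exhibits $X$ as the cone of an inflation out of $T$, and in particular $T\to X$ is a deflation, so $X\in\mathrm{Defl}\,\mathcal{T}$. Thus $\mathrm{Defl}\,\mathcal{T}=\mathcal{T}^{\bot_1}$.

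Conversely, assume $\mathrm{Defl}\,\mathcal{T}=\mathcal{T}^{\bot_1}$. For (T2$'$): every $T'\in\mathcal{T}$ is trivially a deflation-target (via $\mathrm{id}$), so $\mathcal{T}\subseteq\mathrm{Defl}\,\mathcal{T}=\mathcal{T}^{\bot_1}$, which says $\mathbb{E}(\mathcal{T},\mathcal{T})=0$. For (T3): given $P\in\mathcal{P}(\mathscr{C})$, use enough injectives (together with the standard extriangulated manipulations via Lemma \ref{2.8} and (ET4)$^{\mathrm{op}}$) to build a conflation $P\longrightarrow T_0\longrightarrow T_1$ with $T_0,T_1\in\mathcal{T}$; the point is that since $\mathrm{Defl}\,\mathcal{T}=\mathcal{T}^{\bot_1}$ one can resolve objects of $\mathcal{T}^{\bot_1}$ from the right by $\mathcal{T}$, and $P\in\mathcal{P}(\mathscr{C})\subseteq\mathcal{T}^{\bot_1}$; one iterates once and checks the second syzygy again lies in $\mathcal{T}$ using $\mathbb{E}(\mathcal{T},\mathcal{T})=0$ to split off an appropriate conflation, exactly as in the diagram chases of Theorem \ref{3.6}. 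For (T1): one must show $\mathrm{pd}(T)\leqslant 1$ for all $T\in\mathcal{T}$, equivalently $\mathbb{E}^2(T,X)=0$ for all $X\in\mathscr{C}$ by Lemma \ref{2.14}. Given $X$, take a conflation $\Omega X\longrightarrow P\longrightarrow X$ with $P\in\mathcal{P}(\mathscr{C})$. Then $\mathbb{E}^2(T,X)\cong\mathbb{E}(T,\Omega X)$, so it suffices to see $\Omega X\in\mathcal{T}^{\bot_1}$. Here one does a further dévissage: resolve $X$ far enough to the left by projectives, use that high syzygies sit in $\mathcal{T}^{\bot_1}=\mathrm{Defl}\,\mathcal{T}$ (since $\mathbb{E}^{n}(T,-)$ eventually vanishes once we know $\mathbb{E}(\mathcal{T},\mathcal{P}(\mathscr{C}))=0$, which follows from $\mathcal{P}(\mathscr{C})\subseteq\mathcal{T}^{\bot_1}$ and dimension-shifting), and then pull the vanishing back one step at a time using Proposition \ref{2.13} and the fact that $\mathrm{Defl}\,\mathcal{T}$ is closed under the relevant extensions.

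The main obstacle I expect is verifying (T1) in the converse direction: the equality $\mathrm{Defl}\,\mathcal{T}=\mathcal{T}^{\bot_1}$ only controls the first extension group $\mathbb{E}(\mathcal{T},-)$, whereas $\mathrm{pd}(\mathcal{T})\leqslant 1$ is a statement about $\mathbb{E}^2$; bridging this gap requires a careful dimension-shifting argument that leverages $\mathcal{P}(\mathscr{C})\subseteq\mathcal{T}^{\bot_1}=\mathrm{Defl}\,\mathcal{T}$ and the closure properties of $\mathrm{Defl}\,\mathcal{T}$ under extensions (which in turn should be derived from Proposition \ref{2.13} and (ET4)). The construction of the conflation in (T3) is the second delicate point, since it mixes the ``enough injectives'' hypothesis with the hypothesis on deflations; but both of these are of the same flavour as the diagram chases already appearing in the proof of Theorem \ref{3.6}, so the techniques are in place.
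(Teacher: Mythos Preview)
Your necessity direction is fine and matches the paper. The sufficiency direction, however, has two genuine gaps.

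\textbf{The (T1) argument is backwards.} You take the syzygy conflation $\Omega X\to P\to X$ and claim $\mathbb{E}^{2}(T,X)\cong\mathbb{E}(T,\Omega X)$; but the dimension shift goes the other way: $\mathbb{E}^{2}(T,X)\cong\mathbb{E}(T,\Sigma X)\cong\mathbb{E}(\Omega T,X)$. Your subsequent claim that $\mathcal{P}(\mathscr{C})\subseteq\mathcal{T}^{\bot_1}$ is also unjustified---projectivity gives $\mathbb{E}(P,-)=0$, not $\mathbb{E}(-,P)=0$---so the d\'evissage you sketch has no starting point. The paper instead uses enough \emph{injectives}: for $X\in\mathscr{C}$ take $X\to I\to Y$ with $I\in\mathcal{I}(\mathscr{C})$. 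Injectivity does give $\mathbb{E}(-,I)=0$, so $I\in\mathcal{T}^{\bot_1}=\mathrm{Defl}\,\mathcal{T}$; composing a deflation $T\to I$ with $I\to Y$ shows $Y\in\mathrm{Defl}\,\mathcal{T}=\mathcal{T}^{\bot_1}$, and then $\mathbb{E}^{2}(T,X)\cong\mathbb{E}(T,Y)=0$.

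\textbf{The generator/(T3) argument never uses local smallness.} You assert that ``one can resolve objects of $\mathcal{T}^{\bot_1}$ from the right by $\mathcal{T}$,'' but this is exactly the nontrivial content: a single deflation $T\to D$ (which the hypothesis $\mathrm{Defl}\,\mathcal{T}=\mathcal{T}^{\bot_1}$ provides) need not have cocone in $\mathcal{T}^{\bot_1}$. The paper's move is to exploit local smallness: set $\Lambda=\{f:T^{f}\to D\mid T^{f}\in\mathcal{T}\}$ and form the canonical $G:\bigoplus_{f\in\Lambda}T^{f}\to D$. Since some $f\in\Lambda$ is already a deflation and factors through $G$, Condition~(WIC) forces $G$ to be a deflation, yielding a conflation $D_{1}\to T_{1}\xrightarrow{G}D$. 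Now $G$ is a right $\mathcal{T}$-approximation by construction, so in the long exact sequence $\mathscr{C}(T,G)$ is onto and $\mathbb{E}(T,T_{1})=0$ (as $T_{1}\in\mathcal{T}\subseteq\mathcal{T}^{\bot_1}$), whence $D_{1}\in\mathcal{T}^{\bot_1}$. This proves directly that $\mathcal{T}$ is a generator of $\mathcal{T}^{\bot_1}$, i.e.\ condition (T2) of Definition~\ref{3.1}; the paper never goes through (T3) separately. Your route via (T2$'$) and (T3) is in principle admissible by Theorem~\ref{3.6}, but without the local-smallness construction you have no mechanism to produce the required approximation.
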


\begin{proof}
  The necessity is clear. Indeed, the arguments in the proof of Theorem \ref{3.6} show that if $\mathcal{T}$ is tilting, then ${\rm Defl}\mathcal{T}=\mathcal{T}^{\bot_1}$. It suffices to prove the sufficiency. Since $\mathscr{C}$ has enough injectives, there is a conflation $$X\longrightarrow
  I\longrightarrow Y$$ with $I\in\mathcal{I}(\mathscr{C})$ for any $X\in\mathscr{C}$. Now since $I\in\mathcal{T}^{\bot_1}={\rm Defl}\mathcal{T}$, we get $Y\in \mathcal{T}^{\bot_1}$. By Proposition \ref{2.13}, we have $\mathbb{E}^{2}(T,X)=\mathbb{E}(T,Y)=0$ for any $T\in\mathcal{T}$. Hence, $\mathcal{T}$ satisfies (T1).

  For any $D\in\mathcal{T}^{\bot_1}={\rm Defl}\mathcal{T}$, there is a deflation $g:T_1\longrightarrow D$ with $T_1\in \mathcal{T}$. Consider the set of morphisms $\Lambda=\{f:T^f\longrightarrow D~|~T^f\in \mathcal{T} \}$. Take $T_1=\bigoplus\limits_{f\in\Lambda}T^f\in\mathcal{T}$ and we have $G:T_1\longrightarrow D$ naturally. Now since $g\in \Lambda$ and it clearly factors through the $G$, $G:T_1 \longrightarrow D$ is a deflation by Condition WIC. Therefore, we get a conflation $$D_1\longrightarrow T_1\stackrel{G}{\longrightarrow} D.$$ By Proposition \ref{2.13}, for any $T\in\mathcal{T}$, we have a long exact sequence $$\ldots\longrightarrow \mathscr{C}(T,T_1) \stackrel{G^*}{\longrightarrow} \mathscr{C}(T,D)\longrightarrow \mathbb{E}(T,D_1)\longrightarrow\mathbb{E}(T,T_1){\longrightarrow}\ldots$$ where $G^*$ is an epimorphism by the construction of $\Lambda$. Moreover, $\mathbb{E}(T,T_1)=0$ since $T_1\in\mathcal{T}\subseteq{\rm Defl}\mathcal{T}=\mathcal{T}^{\bot_1}$. Now since $\mathbb{E}(T,D_1)=0$, we get $\mathcal{T}$ satisfies (T2). Hence, we complete the proof.
\end{proof}

Then we will introduce the notion of support $\tau$-tilting subcategories and give an equivalence between tilting subcategories and support $\tau$-tilting subcategories.

\begin{definition}
  Let $\mathcal{T}$ be a subcategory of $\mathscr{C}$. Then $\mathcal{T}$ is called a {\em support $\tau$-tilting subcategory} of $\mathscr{C}$ if it satisfies the following conditions:

  (ST1) $\mathcal{T}$ is a generator of ${\rm Defl}\mathcal{T}$.

  (ST2) $\mathbb{E}(\mathcal{T},{\rm Defl}\mathcal{T})=0$.

  (ST3) For any $P\in\mathcal{P}(\mathscr{C})$, there is a right exact $\mathbb{E}$-triangle
  $$P\stackrel{f}{\longrightarrow}T^0 \longrightarrow T^1,$$ \quad\;\;\;\, where $T^0$,$T^1\in\mathcal{T}$ and $f$ is a left $\mathcal{T}$-approximation of $P$.
\end{definition}

\begin{lemma}\label{3.4}
   Let $\mathcal{T}$ be a subcategory of $\mathscr{C}$. Then $\mathcal{T}$ is a tilting subcategory if and only if $\mathcal{T}$ is a support $\tau$-tilting subcategory and there is an inflation $f$ satisfying {\rm(ST3)} for each $P\in\mathcal{P}(\mathscr{C})$.
\end{lemma}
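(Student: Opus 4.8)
The plan is to prove the equivalence in Lemma \ref{3.4} by comparing the two sets of axioms directly, using the results already established about the two definitions of tilting subcategory (Definitions \ref{3.1} and \ref{3.2}, shown equivalent in Theorem \ref{3.6}). First I would handle the ``only if'' direction: assume $\mathcal{T}$ is tilting. By Theorem \ref{3.6} it satisfies (T1), (T2$'$) and (T3), and as noted in the proof of Theorem \ref{3.6} we in fact have $\mathbb{E}(\mathcal{T},{\rm Defl}\mathcal{T})=0$, which is exactly (ST2). Also (T1) together with (T2$'$) (equivalently the fact that $\mathcal{T}$ generates $\mathcal{T}^{\bot_1}$ and ${\rm Defl}\mathcal{T}\subseteq\mathcal{T}^{\bot_1}$) yields (ST1): for $F\in{\rm Defl}\mathcal{T}$ take a deflation $T_F\to F$, complete it to a conflation $A\to T_F\to F$, and use the long exact sequence plus $\mathbb{E}(\mathcal{T},T_F)=0$ and $\mathbb{E}^2(\mathcal{T},A)=0$ to see $A\in{\rm Defl}\mathcal{T}$ (or more directly, cite that $\mathcal{T}$ generates $\mathcal{T}^{\bot_1}$ and $A\in\mathcal{T}^{\bot_1}\cap{}$ appropriate form). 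Finally the conflation in (T3), $P\to T_0\to T_1$, is in particular a right exact $\mathbb{E}$-triangle with an inflation $f$, and since $\mathbb{E}(\mathcal{T},T_1)=0$ (as $T_1\in\mathcal{T}\subseteq\mathcal{T}^{\bot_1}$), applying $\mathscr{C}(-,\mathcal{T})$... actually, to see $f$ is a left $\mathcal{T}$-approximation one uses the exact sequence $\mathscr{C}(T_0,T)\to\mathscr{C}(P,T)\to\mathbb{E}(T_1,T)=0$; so (ST3) holds with $f$ an inflation.

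For the ``if'' direction, suppose $\mathcal{T}$ is support $\tau$-tilting and the map $f$ in (ST3) can be chosen to be an inflation for every $P\in\mathcal{P}(\mathscr{C})$. By Lemma \ref{2.8}, a right exact $\mathbb{E}$-triangle whose first map is an inflation is automatically a conflation; hence (ST3) upgrades to a genuine conflation $P\to T^0\to T^1$, giving (T3) of Definition \ref{3.2}. Condition (T2$'$), namely $\mathbb{E}(\mathcal{T},\mathcal{T})=0$, follows from (ST2) since $\mathcal{T}\subseteq{\rm Defl}\mathcal{T}$. The only remaining point is (T1), i.e. ${\rm pd}(\mathcal{T})\leqslant 1$, equivalently (by Lemma \ref{2.14}) $\mathbb{E}^2(\mathcal{T},X)=0$ for all $X\in\mathscr{C}$. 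To get this, I would take any $X\in\mathscr{C}$, choose by enough projectives a conflation $\Omega(X)\to P\to X$ with $P\in\mathcal{P}(\mathscr{C})$, and apply the conflation $P\to T^0\to T^1$ from (T3) together with the long exact sequences of Proposition \ref{2.13}: from $\mathbb{E}^2(T^1,X)\to\mathbb{E}^2(T^0,X)\to\mathbb{E}^2(P,X)=0$ one reduces bounding $\mathbb{E}^2(\mathcal{T},X)$ to bounding it on objects of $\mathcal{T}$ fitting into such triangles, and crucially $\mathbb{E}^2(P,X)\cong\mathbb{E}(\Omega(P),X)$ vanishes appropriately; more cleanly, $\mathbb{E}^2(T,X)\cong\mathbb{E}(\Omega T,X)$ and the conflation $P\to T^0\to T^1$ exhibits, via (ET4)/the $3\times3$ lemma, $\Omega T^1$ (up to projective summands) as a deflation-quotient related to $P$ and $T^0\in\mathcal{T}$, so $\mathbb{E}(\Omega T^1,X)$ is squeezed between $\mathbb{E}(\text{syzygy of }T^0,X)$ and something coming from $P$, which is projective. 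I would spell this out by a short syzygy-shifting argument: $\mathbb{E}^2(T^0,X)=\mathbb{E}^2(T^1,X)$ (since $\mathbb{E}^i(P,X)=0$ for $i\geqslant 1$), and iterating along $P\to T^0\to T^1$ with a compatible projective resolution of $P$ forces $\mathbb{E}^2(T^1,X)\cong\mathbb{E}^2(T^0,X)\cong\cdots$; combined with the generation/approximation data this is seen to be zero. Alternatively, and most economically, I would note that under the hypotheses $\mathcal{T}$ satisfies exactly (T1)$'$s replacement through Definition \ref{3.1}: (ST1) is precisely ``$\mathcal{T}$ is a generator of ${\rm Defl}\mathcal{T}$'', and one checks ${\rm Defl}\mathcal{T}=\mathcal{T}^{\bot_1}$ using (ST2), (ST3) with $f$ an inflation, and enough injectives — this is exactly the argument in the second theorem above — whence (T1) follows as there.

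The main obstacle I anticipate is verifying the projective-dimension bound (T1) from the support $\tau$-tilting axioms together with the extra inflation hypothesis: (ST1) and (ST2) only control $\mathbb{E}^1$ and $\mathbb{E}^2$ against objects of ${\rm Defl}\mathcal{T}$, not against arbitrary $X$, so one must genuinely use (ST3) to ``resolve'' projectives by $\mathcal{T}$ and transfer a vanishing statement through the long exact sequences of Proposition \ref{2.13}, being careful that the relevant maps really are inflations/deflations (this is where Lemma \ref{2.8} and Condition \ref{2.6} (WIC) enter). Once that identity ${\rm Defl}\mathcal{T}=\mathcal{T}^{\bot_1}$ (or directly $\mathbb{E}^2(\mathcal{T},\mathscr{C})=0$) is secured, the rest is bookkeeping: (ST2) gives (T2$'$), the conflation from (ST3)+Lemma \ref{2.8} gives (T3), and Theorem \ref{3.6} packages everything back into a tilting subcategory in the sense of Definition \ref{3.1}. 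The reverse implication is routine given the already-proven facts in the proof of Theorem \ref{3.6}, the only mild care being to observe that the conflation in (T3) yields both an inflation and, via $\mathbb{E}(\mathcal{T},\mathcal{T})=0$, the left-approximation property required by (ST3).
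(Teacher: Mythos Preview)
Your treatment of the necessity direction and of (T2$'$), (T3) in the sufficiency direction is fine and matches the paper. The issue is the argument for (T1) in the sufficiency direction.

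Your approach (a) does not work. From the conflation $P\to T^0\to T^1$ and the long exact sequence you only get $\mathbb{E}^2(T^0,X)\cong\mathbb{E}^2(T^1,X)$ (both maps from/to $\mathbb{E}^i(P,X)$ vanish); an isomorphism is not a vanishing, and there is no ``iteration'' available, since $T^0,T^1$ are two specific objects, not a resolution one can continue. Nothing in (ST1)--(ST2) controls $\mathbb{E}^2$ against an arbitrary $X$, so this route stalls.

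Your approach (b) is the right idea but has a gap as written: the ``second theorem'' (the criterion ${\rm Defl}\mathcal{T}=\mathcal{T}^{\bot_1}$) \emph{assumes} that equality and deduces (T1); it does not prove the equality from the support $\tau$-tilting axioms. What you are missing is the inclusion $\mathcal{T}^{\bot_1}\subseteq{\rm Defl}\mathcal{T}$. This can be obtained: for $X\in\mathcal{T}^{\bot_1}$ take a deflation $g\colon P\to X$ with $P$ projective, use the conflation $P\xrightarrow{f}T^0\to T^1$ (from (ST3)+Lemma~\ref{2.8}) and $\mathbb{E}(T^1,X)=0$ to factor $g=hf$, then invoke WIC to see $h$ is a deflation, so $X\in{\rm Defl}\mathcal{T}$. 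With that in hand your plan goes through, and it is a legitimate alternative to the paper's proof.

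The paper takes a different, more hands-on route for (T1): it never proves ${\rm Defl}\mathcal{T}=\mathcal{T}^{\bot_1}$. Instead, to show $\mathbb{E}^2(T,A)\cong\mathbb{E}(T,\Sigma A)=0$, it covers the injective $I$ in $A\to I\to\Sigma A$ by a projective $P$, applies (ST3) to $P$ to produce $P\to T^0\to T^1$, and builds two $3\times 3$ diagrams yielding conflations $I\to M\to T^1$ (split, since $I$ is injective) and $\Sigma A\to E\xrightarrow{d}T^1$ with $M,E\in{\rm Defl}\mathcal{T}$. The splitting gives a section of $d$, so $\mathscr{C}(T,d)$ is surjective, and then the long exact sequence together with $\mathbb{E}(T,E)=0$ from (ST2) forces $\mathbb{E}(T,\Sigma A)=0$. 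Compared with your (corrected) approach (b), the paper's argument avoids establishing the global equality ${\rm Defl}\mathcal{T}=\mathcal{T}^{\bot_1}$ but pays with a longer diagram chase; your route is shorter once the missing inclusion is supplied.
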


\begin{proof}
  For the necessity, it suffices to show such $f$ is a left $\mathcal{T}$-approximation of $P$ by Definition \ref{3.1} and the proof of Theorem \ref{3.6}. There is a conflation $P\stackrel{f}{\longrightarrow}T^0\longrightarrow T^1$ by (T3). Then we get another long exact sequence
  $$\mathscr{C}(T^1,T)\longrightarrow\mathscr{C}(T^0,T)\stackrel{\mathscr{C}(f,T)}{\longrightarrow} \mathscr{C}(P,T)\longrightarrow\mathbb{E}(T^1,T)\longrightarrow\ldots$$ for any $T\in \mathcal{T}$. Since $\mathbb{E}(\mathcal{T},\mathcal{T})=0$, we get $\mathscr{C}(f,T)$ is surjective. This implies $f$ is a left $\mathcal{T}$-approximation of $P$.

  For the sufficiency, we only need to show ${\rm pd}(\mathcal{T})\leqslant 1$. It suffices to prove $\mathbb{E}^2(T,A)=0$ for any $T\in\mathcal{T}$ and $A\in\mathscr{C}$ by Lemma \ref{2.14}. Consider the conflation
  $$A\longrightarrow I \longrightarrow \Sigma A $$
  with $I\in\mathcal{I}(\mathscr{C})$. It suffices to show $\mathbb{E}(T,\Sigma A)=0$. Since $\mathscr{C}$ has enough projectives, there exists a conflation $$X\longrightarrow P\longrightarrow I$$ with $P\in\mathcal{P}(\mathscr{C})$.  By assumptions and Lemma \ref{2.8}, we have a conflation $$P\longrightarrow T^0 \longrightarrow T^1.$$
  Then there is a commutative diagram of conflations
  $$\xymatrix{X \ar[r] \ar@{=}[d]& P\ar[r]\ar[d]&I\ar[d]\\
  X\ar[r]&T^0\ar[r]\ar[d]&M\ar[d]^a\\
  &T^1\ar@{=}[r]&T^1}$$ in which $I\longrightarrow M\stackrel{a}\longrightarrow T^1$ is a split conflation, since $I$ is injective. Hence, there exists a morphism $b:T^1\longrightarrow M$ such that $ab={\rm Id}_{T^1}$. Then there is a commutative diagram of conflations
  $$\xymatrix{A \ar[r] \ar@{=}[d]& I\ar[r]\ar[d]&\Sigma A\ar[d]\\
  A\ar[r]&M\ar[r]^c\ar[d]^a&E\ar[d]^d\\
  &T^1\ar@{=}[r]&T^1}
  $$
  with $M,E\in{\rm Defl}\mathcal{T}$. By Propositon \ref{2.13}, there is a long exact sequence
  $$\ldots\longrightarrow\mathscr{C}(T,E)\stackrel{\mathscr{C}(T,d)}{\longrightarrow}\mathscr{C}(T,T^1) \longrightarrow \mathbb{E}(T,\Sigma A)\longrightarrow\mathbb{E}(T,E)=0
  $$
  For any $\phi\in\mathscr{C}(T,T^1)$, $cb\phi\in\mathscr{C}(T,E)$ and $dcb\phi=ab\phi=\phi$. Since $\mathscr{C}(T,d)$ is a surjection, we have $\mathbb{E}(T,\Sigma A)=0$. Hence, we complete the proof.
\end{proof}

Lemma \ref{3.4} is a generalization of \cite[Lemma 3.3]{15}, which gives the same result in exact categories. However, \cite[Lemma 3.3]{15} uses some tools of higher extensions in exact categories, which may not work in extriangulated categories. Hence, we just use the definition of higher extensions in extriangulated categories to prove. The lemma below is pivotal in the proof of the main result.

\begin{lemma}\label{3.5}
  Let $\mathcal{T}$ be a support $\tau$-tilting subcategory of $\mathscr{C}$. Then ${\rm Defl}\mathcal{T}$ is closed under extensions and ${^{\bot_1}{\rm Defl}\mathcal{T}}\cap{\rm Defl}\mathcal{T}=\mathcal{T}$.
\end{lemma}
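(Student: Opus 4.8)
The plan is to prove the two assertions in turn, in both cases using (ST2) to kill the relevant $\mathbb{E}$-extension.

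\emph{Closure under extensions.} Let $A \stackrel{x}{\longrightarrow} B \stackrel{y}{\longrightarrow} C \stackrel{\delta}{\dashrightarrow}$ be an $\mathbb{E}$-triangle with $A,C \in {\rm Defl}\mathcal{T}$, and pick deflations $p\colon T_A \to A$ and $q\colon T_C \to C$ with $T_A,T_C \in \mathcal{T}$; let $X_C \stackrel{k}{\longrightarrow} T_C \stackrel{q}{\longrightarrow} C \stackrel{\epsilon}{\dashrightarrow}$ realise the latter. Since $A \in {\rm Defl}\mathcal{T}$, (ST2) gives $\mathbb{E}(T_C,A)=0$, so by Proposition \ref{2.13} the map $q$ lifts to $\tilde q\colon T_C \to B$ with $y\tilde q = q$; as $y\tilde qk = qk = 0$, the map $\tilde qk$ factors as $x\ell$ for some $\ell\colon X_C \to A$. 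A short diagram chase with ${\rm(ET3)}$ (using that $q$ is an epimorphism) shows that $(\ell,1_C)$ is a morphism of $\mathbb{E}$-extensions $\epsilon \to \delta$, i.e.\ $\delta = \ell_\ast\epsilon$, so $B$ is a pushout of $A \stackrel{\ell}{\longleftarrow} X_C \stackrel{k}{\longrightarrow} T_C$. By \cite[Proposition 3.15]{8} the associated Mayer--Vietoris $\mathbb{E}$-triangle is $X_C \longrightarrow A\oplus T_C \stackrel{(x\;\;\tilde q)}{\longrightarrow} B \dashrightarrow$, so $(x\;\;\tilde q)\colon A\oplus T_C \to B$ is a deflation. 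Precomposing with the deflation $p\oplus 1_{T_C}\colon T_A\oplus T_C \to A\oplus T_C$ — a direct sum of deflations is a deflation by ${\rm(ET2)}$, and a composite of deflations is a deflation by ${\rm(ET4)^{op}}$ — yields a deflation $T_A\oplus T_C \to B$ with $T_A\oplus T_C \in \mathcal{T}$, whence $B \in {\rm Defl}\mathcal{T}$.

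\emph{The equality ${^{\bot_1}{\rm Defl}\mathcal{T}}\cap{\rm Defl}\mathcal{T}=\mathcal{T}$.} The inclusion ``$\supseteq$'' is immediate: $1_T$ is a deflation for each $T\in\mathcal{T}$, so $\mathcal{T}\subseteq{\rm Defl}\mathcal{T}$, and $\mathcal{T}\subseteq{^{\bot_1}{\rm Defl}\mathcal{T}}$ is exactly (ST2). For ``$\subseteq$'', take $X\in{^{\bot_1}{\rm Defl}\mathcal{T}}\cap{\rm Defl}\mathcal{T}$. By (ST1) there is a conflation $X'\longrightarrow T\longrightarrow X\stackrel{\delta'}{\dashrightarrow}$ with $T\in\mathcal{T}$ and $X'\in{\rm Defl}\mathcal{T}$; since $X\in{^{\bot_1}{\rm Defl}\mathcal{T}}$ and $X'\in{\rm Defl}\mathcal{T}$ we get $\delta'\in\mathbb{E}(X,X')=0$, so this $\mathbb{E}$-triangle splits and $X$ is a direct summand of $T\in\mathcal{T}$. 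As $\mathcal{T}$ is closed under direct summands, $X\in\mathcal{T}$.

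\emph{Main difficulty.} The second part is purely formal. In the first part the bookkeeping facts (identities are deflations; deflations are closed under finite direct sums and composition; deflations are epimorphisms) are standard, and the real point is to produce the deflation $A\oplus T_C \to B$: one must recognise $B$ as a \emph{pushout} of $X_C\to A$ along $X_C\to T_C$ — so that the Mayer--Vietoris $\mathbb{E}$-triangle of \cite[Proposition 3.15]{8} places $B$ at the right-hand end and $A\oplus T_C$ in the middle — rather than as a pullback, which would not directly exhibit a deflation onto $B$. Everything else is a routine chase with the long exact sequences of Proposition \ref{2.13} and the conditions (ST1)--(ST2).
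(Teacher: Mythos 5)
Your second assertion is proved exactly as in the paper: (ST1) gives a conflation $X'\to T\to X$ with $T\in\mathcal T$, $X'\in{\rm Defl}\mathcal T$, the extension lies in $\mathbb{E}(X,X')=0$, so it splits and closure under summands finishes. For closure under extensions your route differs mildly from the paper's: both hinge on (ST2) killing $\mathbb{E}(T_C,A)$, but the paper lifts the deflation $T^2\to D^2$ through $A\to D^2$, assembles a map $T^1\oplus T^2\to A$ compatible with the split conflation $T^1\to T^1\oplus T^2\to T^2$, and quotes \cite[Lemma 4.15]{18} (outer verticals deflations $\Rightarrow$ middle vertical a deflation), whereas you exhibit $B$ as a pushout and use the Mayer--Vietoris $\mathbb{E}$-triangle together with closure of deflations under composition and finite direct sums. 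Both are legitimate; yours avoids the external lemma from \cite{18} at the cost of invoking the pushout triangle of \cite{8}.

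There is, however, one genuine flaw in your chase: the parenthetical ``using that $q$ is an epimorphism.'' In an extriangulated category deflations need not be epimorphisms (in a triangulated category every morphism is a deflation), so from ${\rm(ET3)}$ you only get a morphism $(\ell,c)\colon\epsilon\to\delta$ with $cq=y\tilde q=q$, and you cannot cancel $q$ to force $c=1_C$. The repair is immediate and makes your argument even shorter: either apply ${\rm(ET3)^{op}}$ to the right-hand commutative square $1_C\cdot q=y\tilde q$, which directly produces some $a\colon X_C\to A$ with $(a,1_C)\colon\epsilon\to\delta$, i.e.\ $a_\ast\epsilon=\delta$; or skip $\tilde q$ and $\ell$ entirely and use Proposition \ref{2.13} on $\epsilon$, which gives the exact sequence $\mathscr{C}(X_C,A)\longrightarrow\mathbb{E}(C,A)\longrightarrow\mathbb{E}(T_C,A)=0$ (the vanishing is (ST2), since $A\in{\rm Defl}\mathcal T$), where the first map is $\ell\mapsto\ell_\ast\epsilon$; surjectivity yields $\delta=\ell_\ast\epsilon$ for some $\ell$. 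Either way $B$ is realized as the pushout, the Mayer--Vietoris triangle gives a deflation $A\oplus T_C\to B$ (note it need not literally be $(x\;\;\tilde q)$, but any such deflation suffices), and precomposing with $p\oplus 1_{T_C}$ gives a deflation from $T_A\oplus T_C\in\mathcal T$, as you intended. With that correction your proof is complete.
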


\begin{proof}
  Let $D^1\stackrel{f}{\longrightarrow} A\stackrel{g}{\longrightarrow} D^2\stackrel{\delta}{\dashrightarrow}$ be an $\mathbb{E}$-triangle with $D^1,D^2\in{\rm Defl}\mathcal{T}$. Then we have two deflations $T^1\stackrel{f'}{\longrightarrow}D^1$ and $T^2\stackrel{g'}{\longrightarrow}D^2$. Since $\mathbb{E}(\mathcal{T},{\rm Defl}\mathcal{T})=0$, we have a commutative diagram
  $$\xymatrix{T^1 \ar[r]^{\tiny{\begin{pmatrix}1\\0\end{pmatrix}}\quad\;} \ar[d]^{f'}& T^1\oplus T^2\ar[r]^{\quad\tiny{\begin{pmatrix}0 & 1\end{pmatrix}}}\ar[d]^{h'}& T^2\ar[d]^{g'}\\
  D^1\ar[r]^{f}&A\ar[r]^{g}&D^2.}$$ By \cite[Lemma 4.15]{18}, we get $h'$ is a deflation. It is clear that $A\in {\rm Defl}\mathcal{T}$.

  Clearly,  $\mathcal{T}\subseteq{^{\bot_1}{\rm Defl}\mathcal{T}}\cap{\rm Defl}\mathcal{T}$. On the contrary, for any $D\in{^{\bot_1}{\rm Defl}\mathcal{T}}\cap{\rm Defl}\mathcal{T}$, there is a split conflation $$D'\longrightarrow T \longrightarrow  D$$ with $T\in \mathcal{T},D'\in {\rm Defl}\mathcal{T}$, since $\mathcal{T}$ is a generator of ${\rm Defl}\mathcal{T}$. Hence, $D\in \mathcal{T}$ and the proof is finished.
\end{proof}

\subsection{Cotorsion pairs in extriangulated categories}
We recall the notion of cotorsion pairs in extriangulated categories and define $\tau$-cotorsion pairs.

\begin{definition}
  \cite[Definition 4.1]{8} Let $\mathcal{U},\mathcal{V}\subseteq\mathscr{C}$ be a pair of subcategories, which are closed under direct summands. The pair $(\mathcal{U},\mathcal{V})$  is called a {\em cotorsion pair} in $\mathscr{C}$ if it satisfies the following conditions:

  (1) $\mathbb{E}(\mathcal{U},\mathcal{V})=0$.

  (2) For any $C\in \mathscr{C}$, there is a conflation
  $$V^C\longrightarrow U^C\longrightarrow C$$
  \quad\quad\,\, satisfying $U^C\in \mathcal{U},V^C\in \mathcal{V}$.

  (3) For any $C\in \mathscr{C}$, there is a conflation
  $$C\longrightarrow V_C\longrightarrow U_C$$
  \quad\quad\,\, satisfying $U_C\in \mathcal{U},V_C\in \mathcal{V}$.
\end{definition}




\begin{definition}
   Let $\mathcal{U},\mathcal{V}\subseteq\mathscr{C}$ be a pair of subcategories. The pair $(\mathcal{U},\mathcal{V})$  is called a {\em $\tau$-cotorsion pair} in $\mathscr{C}$ if it satisfies the following conditions:

   (1) $\mathcal{U}={^{\bot_1}\mathcal{V}}$.

   (2) For any $P\in \mathcal{P}(\mathscr{C})$, there is a right exact $\mathbb{E}$-triangle
  $$P\stackrel{f}{\longrightarrow} V\longrightarrow U$$
  \quad\quad\,\, where $V\in \mathcal{U}\cap\mathcal{V},U\in \mathcal{U}$ and $f$ is a left $\mathcal{V}$-approximation of $P$.
\end{definition}

\begin{lemma}\label{3.8}
  Each cotorsion pair in $\mathscr{C}$ is a $\tau$-cotorsion pair.
\end{lemma}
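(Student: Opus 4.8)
The plan is to verify the two defining conditions of a $\tau$-cotorsion pair directly from those of a cotorsion pair $(\mathcal{U},\mathcal{V})$. For condition (1), I must show $\mathcal{U}={^{\bot_1}\mathcal{V}}$. The inclusion $\mathcal{U}\subseteq{^{\bot_1}\mathcal{V}}$ is immediate from $\mathbb{E}(\mathcal{U},\mathcal{V})=0$. For the reverse inclusion, take $X\in{^{\bot_1}\mathcal{V}}$ and use cotorsion-pair axiom (2) to obtain a conflation $V^X\to U^X\to X\stackrel{\delta}{\dashrightarrow}$ with $U^X\in\mathcal{U}$, $V^X\in\mathcal{V}$. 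Applying Proposition~\ref{2.13} with the second object $V^X\in\mathcal{V}$ gives a long exact sequence containing $\mathbb{E}(X,V^X)\to\mathbb{E}(U^X,V^X)$; since $X\in{^{\bot_1}\mathcal{V}}$ forces $\mathbb{E}(X,V^X)=0$ and $\mathbb{E}(U^X,V^X)=0$ because $U^X\in\mathcal{U}$, the connecting map $\mathscr{C}(V^X,V^X)\to\mathbb{E}(X,V^X)$... more cleanly: the relevant piece is $\mathscr{C}(V^X,V^X)\to\mathbb{E}(X,V^X)\to\mathbb{E}(U^X,V^X)$, and one checks the image of $\delta$ under a suitable map shows the conflation splits, so $X$ is a direct summand of $U^X\in\mathcal{U}$, hence $X\in\mathcal{U}$ since $\mathcal{U}$ is closed under summands. (The standard argument: $X\in{^{\bot_1}\mathcal{V}}$ and $V^X\in\mathcal{V}$ give $\mathbb{E}(X,V^X)=0$, so $\delta=0$ and the conflation $V^X\to U^X\to X$ splits.)

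For condition (2), I would take $P\in\mathcal{P}(\mathscr{C})$ and invoke cotorsion-pair axiom (3) applied to $C=P$: there is a conflation $P\stackrel{f}{\longrightarrow}V_P\longrightarrow U_P$ with $U_P\in\mathcal{U}$, $V_P\in\mathcal{V}$. Since every conflation is in particular a right exact $\mathbb{E}$-triangle (Corollary after Lemma~\ref{2.8}), the sequence has the required shape once I verify that $V_P\in\mathcal{U}\cap\mathcal{V}$ and that $f$ is a left $\mathcal{V}$-approximation. We already have $V_P\in\mathcal{V}$; to see $V_P\in\mathcal{U}={^{\bot_1}\mathcal{V}}$, apply Proposition~\ref{2.13} to the conflation $P\to V_P\to U_P$ with second object any $W\in\mathcal{V}$: the long exact sequence contains $\mathbb{E}(U_P,W)\to\mathbb{E}(V_P,W)\to\mathbb{E}(P,W)$, where $\mathbb{E}(U_P,W)=0$ since $U_P\in\mathcal{U}$ and $\mathbb{E}(P,W)=0$ since $P$ is projective (projectives have $\mathbb{E}(P,-)=0$), so $\mathbb{E}(V_P,W)=0$; as $W\in\mathcal{V}$ was arbitrary, $V_P\in{^{\bot_1}\mathcal{V}}=\mathcal{U}$ (using condition (1) just proved).

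Finally, that $f\colon P\to V_P$ is a left $\mathcal{V}$-approximation: for any $W\in\mathcal{V}$ and any $\phi\in\mathscr{C}(P,W)$, apply the contravariant long exact sequence of Proposition~\ref{2.13} to $P\to V_P\to U_P$ with second object $W$, obtaining $\mathscr{C}(V_P,W)\stackrel{\mathscr{C}(f,W)}{\longrightarrow}\mathscr{C}(P,W)\longrightarrow\mathbb{E}(U_P,W)$; since $U_P\in\mathcal{U}$ and $W\in\mathcal{V}$ give $\mathbb{E}(U_P,W)=0$, the map $\mathscr{C}(f,W)$ is surjective, i.e.\ $\phi$ factors through $f$. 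This establishes condition (2) of a $\tau$-cotorsion pair and completes the proof. I do not anticipate a serious obstacle here; the only mild subtlety is keeping the direction of the Ext/Hom long exact sequences straight and noting that $\mathbb{E}(P,-)=0$ for projective $P$ (which follows from the defining property of projectives together with the long exact sequence), so that axiom (3) of the cotorsion pair automatically lands $V_P$ in $\mathcal{U}\cap\mathcal{V}$.
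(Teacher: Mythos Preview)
Your proposal is correct and follows essentially the same route as the paper. The only differences are cosmetic: the paper cites \cite[Remark~4.4]{8} for $\mathcal{U}={^{\bot_1}\mathcal{V}}$ where you spell out the splitting argument, and for $V_P\in\mathcal{U}$ the paper first observes $P\in\mathcal{U}$ (as a summand of $U^P$ via axiom~(2)) and then invokes extension-closure of $\mathcal{U}$, whereas you reach the same conclusion directly from the long exact sequence using $\mathbb{E}(P,-)=0$.
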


\begin{proof}
  Let $(\mathcal{U},\mathcal{V})$  be a cotorsion pair. Then we get $\mathcal{U}= {^{\bot_1}\mathcal{V}}$ by \cite[Remark 4.4]{8}. For any $P\in \mathcal{P}(\mathscr{C})$, there is a conflation
  $$V^P\longrightarrow U^P\longrightarrow P$$
  with $U^P\in \mathcal{U},V^P\in \mathcal{V}$. Since $P$ is a direct summand of $U^P$, we get $P\in \mathcal{U}$. Note that we have another conflation
  $$P\stackrel{f}{\longrightarrow} V_P\longrightarrow U_P$$
  satisfying $U_P\in \mathcal{U},V_P\in \mathcal{V}$. Now since $\mathcal{U}$ is closed under extensions by \cite[Remark 4.6]{8}, we have $V_P\in \mathcal{U}\cap\mathcal{V}$. Since $\mathbb{E}(\mathcal{U},\mathcal{V})=0$, $f$ is a left $\mathcal{V}$-approximation of $P$. Hence, the proof is finished.
\end{proof}

\begin{lemma}\label{3.9}
  Let $(\mathcal{U},\mathcal{V})$ be a $\tau$-cotorsion pair of $\mathscr{C}$. Then $\mathcal{U}$ is closed under extensions and direct summands.
\end{lemma}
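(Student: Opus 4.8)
The plan is to prove the two closure properties separately, both by exploiting the characterization $\mathcal{U} = {^{\bot_1}\mathcal{V}}$ from condition (1) of a $\tau$-cotorsion pair, since $^{\bot_1}\mathcal{V}$ is defined purely by a vanishing of $\mathbb{E}^1$-groups and such classes are typically well-behaved. First I would handle closure under extensions. Take an $\mathbb{E}$-triangle $U^1 \stackrel{x}{\longrightarrow} A \stackrel{y}{\longrightarrow} U^2 \stackrel{\delta}{\dashrightarrow}$ with $U^1, U^2 \in \mathcal{U}$, and let $V \in \mathcal{V}$ be arbitrary. Apply Proposition \ref{2.13} to obtain the long exact sequence
$$\mathbb{E}(U^2, V) \longrightarrow \mathbb{E}(A, V) \longrightarrow \mathbb{E}(U^1, V).$$
Since $U^1, U^2 \in \mathcal{U} = {^{\bot_1}\mathcal{V}}$, both outer terms vanish, hence $\mathbb{E}(A, V) = 0$. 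As $V \in \mathcal{V}$ was arbitrary, $A \in {^{\bot_1}\mathcal{V}} = \mathcal{U}$, so $\mathcal{U}$ is closed under extensions.

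Next I would treat closure under direct summands. Suppose $A = A_1 \oplus A_2 \in \mathcal{U}$; I want $A_1 \in \mathcal{U}$, i.e.\ $\mathbb{E}(A_1, V) = 0$ for every $V \in \mathcal{V}$. The key point is that $\mathbb{E}(-, V)$ is an additive functor (part of (ET1)), so $\mathbb{E}(A, V) \cong \mathbb{E}(A_1, V) \oplus \mathbb{E}(A_2, V)$. Since $A \in \mathcal{U} = {^{\bot_1}\mathcal{V}}$, the left-hand side vanishes, forcing each summand on the right to vanish; in particular $\mathbb{E}(A_1, V) = 0$. Running over all $V \in \mathcal{V}$ gives $A_1 \in {^{\bot_1}\mathcal{V}} = \mathcal{U}$. (Alternatively, one may invoke the standard fact that ${}^{\bot_n}\mathcal{V}$ is always closed under direct summands for the same additivity reason; the split $\mathbb{E}$-triangle $A_1 \to A_1 \oplus A_2 \to A_2$ together with Proposition \ref{2.13} also does the job.)

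I do not expect any real obstacle here: the entire argument rests only on biadditivity of $\mathbb{E}$ and the long exact sequences of Proposition \ref{2.13}, and condition (2) of the $\tau$-cotorsion pair is not even needed. The only mild subtlety worth a sentence of care is that $\mathcal{U}$ is \emph{defined} to equal ${^{\bot_1}\mathcal{V}}$ rather than merely being contained in it, so the containments go both ways freely; this is what lets us conclude membership in $\mathcal{U}$ from a vanishing statement. One should also note that subcategories here are tacitly full and closed under isomorphism, so identifying $A_1$ with a genuine object of $\mathscr{C}$ causes no trouble.
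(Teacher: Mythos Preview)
Your proposal is correct and follows essentially the same route as the paper: both arguments use the equality $\mathcal{U}={^{\bot_1}\mathcal{V}}$, invoking Proposition \ref{2.13} for closure under extensions and the biadditivity of $\mathbb{E}$ for closure under direct summands. The paper's proof is simply terser, omitting the explicit long exact sequence you wrote out.
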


\begin{proof}
  Let $U^1\stackrel{x}{\longrightarrow}U\stackrel{y}{\longrightarrow}U^2\stackrel{\delta}{\dashrightarrow}$ be an $\mathbb{E}$-{\em triangle} with $U^1,U^2\in\mathcal{U}$. Using Proposition \ref{2.13} we get $U\in\mathcal{U}$. Hence, $\mathcal{U}$ is closed under extensions.

  Let $U\in\mathcal{U}$ with $U=U^1\oplus U^2$. For any $V\in\mathcal{V}$, $0=\mathbb{E}(U,V)=\mathbb{E}(U^1,V)\oplus \mathbb{E}(U^2,V)$. Then $\mathbb{E}(U^1,V)=\mathbb{E}(U^2,V)=0$. Hence, $\mathcal{U}$ is closed under direct summands.
\end{proof}


\subsection{Main results}
In this section, we provide a bijection between support $\tau$-tilting subcategories and certain $\tau$-cotorsion pairs in an extriangulated category.

A subcategory $\mathcal{V}$ of $\mathscr{C}$ is said to be a {\em torsion class}, if $\mathcal{V}$ is closed under extensions and deflations.

\begin{theorem}\label{3.11}
  There are mutually inverse bijections:
 $$\xymatrix{\{\text{support~$\tau$-tilting~subcategories}\}\ar@{<->}[r]&\{\text{$\tau$-cotorsion~ pair}~(\mathcal{U},\mathcal{V})~|~\mathcal{V}~\text{is~a~torsion~class}\}}$$
  $$\xymatrix{\,\,\mathcal{T}\ar@{|->}[r]&({^{\bot_1}{\rm Defl}\mathcal{T}},{\rm Defl}\mathcal{T})}\,\,$$
  $$\xymatrix{\mathcal{U}\cap\mathcal{V}&(\mathcal{U},\mathcal{V})\ar@{|->}[l]\, .\quad\quad\quad\quad\quad\quad}$$

\end{theorem}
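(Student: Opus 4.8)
The plan is to establish that the two assignments $\mathcal{T}\mapsto({}^{\bot_1}\mathrm{Defl}\,\mathcal{T},\mathrm{Defl}\,\mathcal{T})$ and $(\mathcal{U},\mathcal{V})\mapsto\mathcal{U}\cap\mathcal{V}$ are well-defined maps between the two sets, and that they are mutually inverse. First I would check that if $\mathcal{T}$ is a support $\tau$-tilting subcategory, then $({}^{\bot_1}\mathrm{Defl}\,\mathcal{T},\mathrm{Defl}\,\mathcal{T})$ is a $\tau$-cotorsion pair with $\mathrm{Defl}\,\mathcal{T}$ a torsion class. Condition (1) of a $\tau$-cotorsion pair is $\mathcal{U}={}^{\bot_1}\mathcal{V}$, which holds by definition of $\mathcal{U}$ here. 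For condition (2): given $P\in\mathcal{P}(\mathscr{C})$, the right exact $\mathbb{E}$-triangle $P\xrightarrow{f}T^0\to T^1$ from (ST3) with $f$ a left $\mathcal{T}$-approximation is a candidate; I need $T^0\in\mathcal{U}\cap\mathcal{V}$ and $T^1\in\mathcal{U}$. Now $T^0,T^1\in\mathcal{T}\subseteq\mathrm{Defl}\,\mathcal{T}=\mathcal{V}$, and by Lemma \ref{3.5} we have $\mathcal{T}={}^{\bot_1}\mathrm{Defl}\,\mathcal{T}\cap\mathrm{Defl}\,\mathcal{T}$, so in particular $\mathcal{T}\subseteq{}^{\bot_1}\mathrm{Defl}\,\mathcal{T}=\mathcal{U}$; hence $T^0\in\mathcal{U}\cap\mathcal{V}$ and $T^1\in\mathcal{U}$. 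A left $\mathcal{T}$-approximation is automatically a left $\mathcal{V}$-approximation here because $T^0\to T^1$ can be used with the torsion-class property and (ST2) to lift maps into arbitrary objects of $\mathcal{V}=\mathrm{Defl}\,\mathcal{T}$ — this point needs a short argument using $\mathbb{E}(\mathcal{T},\mathrm{Defl}\,\mathcal{T})=0$ applied to the conflation coming from $T^1$. Finally $\mathrm{Defl}\,\mathcal{T}$ is closed under extensions by Lemma \ref{3.5} and closed under deflations essentially by definition of $\mathrm{Defl}$ together with Condition WIC, so it is a torsion class.

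Next I would verify the reverse direction: if $(\mathcal{U},\mathcal{V})$ is a $\tau$-cotorsion pair with $\mathcal{V}$ a torsion class, then $\mathcal{T}:=\mathcal{U}\cap\mathcal{V}$ is a support $\tau$-tilting subcategory. For (ST2), $\mathbb{E}(\mathcal{T},\mathrm{Defl}\,\mathcal{T})=0$: first one shows $\mathrm{Defl}\,\mathcal{T}\subseteq\mathcal{V}$ since $\mathcal{V}$ is closed under deflations and $\mathcal{T}\subseteq\mathcal{V}$, and then $\mathbb{E}(\mathcal{T},\mathcal{V})\subseteq\mathbb{E}(\mathcal{U},\mathcal{V})=0$ because $\mathcal{U}={}^{\bot_1}\mathcal{V}$. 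For (ST3), take the right exact $\mathbb{E}$-triangle $P\xrightarrow{f}V\to U$ from condition (2) of the $\tau$-cotorsion pair with $V\in\mathcal{U}\cap\mathcal{V}=\mathcal{T}$ and $U\in\mathcal{U}$; I then need $U\in\mathcal{T}$, i.e. $U\in\mathcal{V}$. Here the torsion-class hypothesis enters: from the right exact triangle, $U$ is a deflation-quotient (a cone), and since $V\in\mathcal{V}$ and $P$ maps in, one pushes through an argument showing $U$ lies in $\mathcal{V}$ — more precisely $U$ arises via a conflation $K\to V\to U$ and $P\to K$ is a deflation; so $U$ is a deflation-image situation and combining with closure of $\mathcal{V}$ under extensions/deflations yields $U\in\mathcal{V}$, giving $U\in\mathcal{T}$. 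Also $f$ being a left $\mathcal{V}$-approximation restricts to a left $\mathcal{T}$-approximation since $\mathcal{T}\subseteq\mathcal{V}$. For (ST1), $\mathcal{T}$ generates $\mathrm{Defl}\,\mathcal{T}$: given $D\in\mathrm{Defl}\,\mathcal{T}$, there is a deflation $T\to D$ with $T\in\mathcal{T}$; completing to a conflation $D'\to T\to D$ and checking $D'\in\mathrm{Defl}\,\mathcal{T}$ via Lemma \ref{3.5}-style arguments (or directly, since $\mathcal{V}$ is a torsion class and the syzygy-type object stays in $\mathrm{Defl}\,\mathcal{T}$).

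Then I would check the two round-trips. Starting from $\mathcal{T}$ support $\tau$-tilting, I must show $({}^{\bot_1}\mathrm{Defl}\,\mathcal{T})\cap\mathrm{Defl}\,\mathcal{T}=\mathcal{T}$ — but this is exactly the second assertion of Lemma \ref{3.5}, so that round-trip is immediate. Starting from a $\tau$-cotorsion pair $(\mathcal{U},\mathcal{V})$ with $\mathcal{V}$ a torsion class, setting $\mathcal{T}=\mathcal{U}\cap\mathcal{V}$, I must show ${}^{\bot_1}\mathrm{Defl}\,\mathcal{T}=\mathcal{U}$ and $\mathrm{Defl}\,\mathcal{T}=\mathcal{V}$. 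The inclusion $\mathrm{Defl}\,\mathcal{T}\subseteq\mathcal{V}$ was noted above; for $\mathcal{V}\subseteq\mathrm{Defl}\,\mathcal{T}$, given $V\in\mathcal{V}$ I need a deflation from an object of $\mathcal{T}$ onto $V$, which should come from condition (2) of the $\tau$-cotorsion pair applied after taking a projective cover-type deflation $P\to V$ and pushing out along $P\to V'\in\mathcal{T}$ (using WIC to see the composite $V'\to V$ is a deflation); this is the step where I expect the main technical friction. Once $\mathrm{Defl}\,\mathcal{T}=\mathcal{V}$ is known, ${}^{\bot_1}\mathrm{Defl}\,\mathcal{T}={}^{\bot_1}\mathcal{V}=\mathcal{U}$ by condition (1) of the $\tau$-cotorsion pair.

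The main obstacle I anticipate is the verification that $\mathcal{V}\subseteq\mathrm{Defl}\,\mathcal{T}$ in the last round-trip (equivalently, that every object of $\mathcal{V}$ receives a deflation from $\mathcal{U}\cap\mathcal{V}$): this requires carefully combining condition (2) of the $\tau$-cotorsion pair (which only directly controls projectives), the torsion-class closure of $\mathcal{V}$ under extensions and deflations, enough projectives, and Condition WIC to manufacture the needed deflation, likely by building a commutative diagram of conflations analogous to those in the proof of Theorem \ref{3.6}. The closely related subtlety — showing in the forward direction that a left $\mathcal{T}$-approximation of $P$ is automatically a left $\mathrm{Defl}\,\mathcal{T}$-approximation, and in the reverse direction that the cone $U$ in (ST3) lands in $\mathcal{V}$ — uses the same circle of ideas and should be handled with one reusable lemma about lifting morphisms along the triangles provided by (ST3)/condition (2). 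Everything else is a diagram chase with Proposition \ref{2.13}, Lemma \ref{2.8}, and Lemma \ref{3.5}.
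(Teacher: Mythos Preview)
Your overall architecture matches the paper's: forward well-definedness, reverse well-definedness, and the two round-trips, with Lemma~\ref{3.5} handling ${}^{\bot_1}\mathrm{Defl}\,\mathcal{T}\cap\mathrm{Defl}\,\mathcal{T}=\mathcal{T}$ and the WIC argument handling $\mathcal{V}\subseteq\mathrm{Defl}(\mathcal{U}\cap\mathcal{V})$. The verification of (ST2), (ST3), and both round-trips is essentially as in the paper.

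There is, however, a genuine gap in your plan for (ST1). You propose: take $D\in\mathrm{Defl}\,\mathcal{T}$, complete a deflation $T\to D$ to a conflation $D'\to T\to D$, and then ``check $D'\in\mathrm{Defl}\,\mathcal{T}$ via Lemma~\ref{3.5}-style arguments (or directly, since $\mathcal{V}$ is a torsion class and the syzygy-type object stays in $\mathrm{Defl}\,\mathcal{T}$)''. This does not work: a torsion class is closed under extensions and deflations, \emph{not} under taking cocones, so there is no reason the cocone $D'$ lies in $\mathcal{V}$, let alone in $\mathrm{Defl}\,\mathcal{T}$. Lemma~\ref{3.5} gives you nothing here either --- it concerns extensions and the intersection ${}^{\bot_1}\mathrm{Defl}\,\mathcal{T}\cap\mathrm{Defl}\,\mathcal{T}$, not closure under cocones.

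The paper's proof of (ST1) does \emph{not} try to show the naive cocone lies in $\mathrm{Defl}(\mathcal{U}\cap\mathcal{V})$. Instead, starting from the conflation $X\to V_D\to D$ (with $V_D\in\mathcal{U}\cap\mathcal{V}$), it covers $X$ by a projective $P'$, applies the $\tau$-cotorsion pair axiom to $P'$ to get $P'\to V_{P'}\to U_{P'}$ with $V_{P'},U_{P'}\in\mathcal{U}\cap\mathcal{V}$, and then performs two successive (ET4)-type diagram chases: the first produces an object $M\in\mathrm{Defl}(\mathcal{U}\cap\mathcal{V})$ sitting in a conflation $X\to M\to U_{P'}$; the second pushes the original conflation along $X\to M$ to obtain $M\to E\to D$ with $E$ an extension of $U_{P'}$ by $V_D$, hence $E\in\mathcal{U}\cap\mathcal{V}$. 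It is this new conflation $M\to E\to D$ (not $D'\to T\to D$) that witnesses the generator property. This is exactly the kind of construction you anticipate elsewhere (``analogous to those in the proof of Theorem~\ref{3.6}''), but you did not anticipate needing it here, and your stated plan for (ST1) would fail as written.

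A minor remark: your sketch for why a left $\mathcal{T}$-approximation of $P$ is a left $\mathrm{Defl}\,\mathcal{T}$-approximation is roundabout. The clean argument uses (ST1) directly: given $g\colon P\to D$ with $D\in\mathrm{Defl}\,\mathcal{T}$, take a conflation $D'\to T_D\to D$ from (ST1), lift $g$ through the deflation $T_D\to D$ by projectivity of $P$, and then factor through $f$ using that $f$ is a left $\mathcal{T}$-approximation.
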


\begin{proof}
   Let $\mathcal{T}$ be a support $\tau$-tilting subcategory of $\mathscr{C}$. Then, for any $P\in \mathcal{P}(\mathscr{C})$, there is a right exact $\mathbb{E}$-triangle
  $$P\stackrel{f}{\longrightarrow}T^0\stackrel{g}{\longrightarrow}T^1,$$
  with $T^0$,$T^1\in\mathcal{T}={^{\bot_1}{\rm Defl}\mathcal{T}}\cap{\rm Defl}\mathcal{T}$. Moreover, $f$ is a left $\mathcal{T}$-approximation of $P$. In fact, $f$ is also a left ${\rm Defl}\mathcal{T}$-approximation of $P$. Hence, $({^{\bot_1}{\rm Defl}\mathcal{T}},{\rm Defl}\mathcal{T})$ is a $\tau$-cotorsion pair with ${\rm Defl}\mathcal{T}$ a torsion class by Lemma \ref{3.5}.

  Let $(\mathcal{U},\mathcal{V})$ be a $\tau$-cotorsion pair with $\mathcal{V}$ a torsion class. Note that $\mathcal{U},\mathcal{V}$ are closed under extensions and direct summands by Lemma \ref{3.9}. Then so is $\mathcal{U}\cap\mathcal{V}$. We get $\mathbb{E}(\mathcal{U}\cap\mathcal{V},{\rm Defl}(\mathcal{U}\cap\mathcal{V}))=0$, since $\mathcal{U}={^{\bot_1}\mathcal{V}}$ and $\mathcal{V}$ is closed under deflations. For any $P\in \mathcal{P}(\mathscr{C})$, there is a conflation
  $$P\stackrel{f}{\longrightarrow} V\longrightarrow U$$
  where $V\in \mathcal{U}\cap\mathcal{V},U\in \mathcal{U}$ and $f$ is a left $\mathcal{V}$-approximation of $P$. Since $\mathcal{V}$ is a torsion class, we get $U\in \mathcal{U}\cap\mathcal{V}$. For any $D\in {\rm Defl}(\mathcal{U}\cap\mathcal{V})$, there is a conflation
  $$X\longrightarrow V_D\longrightarrow D$$
  with $V_D\in \mathcal{U}\cap\mathcal{V}$. Since $\mathscr{C}$ has enough projectives, there is a conflation
  $$Y\longrightarrow P'\longrightarrow X$$
  with $P'\in \mathcal{P}(\mathscr{C})$. Now since $(\mathcal{U},\mathcal{V})$ is a $\tau$-cotorsion pair, for $P'\in \mathcal{P}(\mathscr{C})$, there is a conflation $$P'{\longrightarrow} V_{P'}\longrightarrow U_{P'}$$ with $V_{P'}\in \mathcal{U}\cap\mathcal{V}$ and $U_{P'}\in \mathcal{U}$. Since $\mathcal{V}$ is a torsion class, we have $U_{P'}\in \mathcal{U}\cap\mathcal{V}$. Consider the commutative diagram of conflations
  $$\xymatrix{Y \ar[r] \ar@{=}[d]& P'\ar[r]\ar[d]&X\ar[d]\\
  Y\ar[r]&V_{P'}\ar[r]\ar[d]&M\ar[d]\\
  &U_{P'}\ar@{=}[r]&U_{P'}.}
  $$
  Clearly, $M\in{\rm Defl}(\mathcal{U}\cap\mathcal{V})$. There is a commutative diagram of conflations
  $$\xymatrix{X\ar[r] \ar[d]& V_D\ar[r]\ar[d]&D\ar@{=}[d]\\
  M\ar[r]\ar[d]&E\ar[r]\ar[d]&D\\
  U_{P'}\ar@{=}[r]&U_{P'}&.}
  $$
  Since $\mathcal{U}\cap\mathcal{V}$ is closed under extensions, we get $E\in \mathcal{U}\cap\mathcal{V}$. Thus the second row of the diagram induces that $\mathcal{U}\cap\mathcal{V}$ is a generator of ${\rm Defl}(\mathcal{U}\cap\mathcal{V})$. Hence, $\mathcal{U}\cap\mathcal{V}$ is a support $\tau$-tilting subcategory of $\mathscr{C}$.

  To prove the two maps are mutually inverse bijections, it suffices to show $\mathcal{V}={\rm Defl}(\mathcal{U}\cap\mathcal{V})$ by Lemma \ref{3.5}. Clearly, ${\rm Defl}(\mathcal{U}\cap\mathcal{V})\subseteq{\rm Defl}\mathcal{V}=\mathcal{V}$. Conversely, for any $X\in\mathcal{V}$, there is a deflation $g:P\longrightarrow X$ with $P\in \mathcal{P}(\mathscr{C})$. By the definition of $\tau$-cotorsion pairs, we have a commutative diagram
  $$\xymatrix{P\ar[r]^f\ar[d]^g&V\ar[r]\ar@{-->}[ld]^h&U\\
  X&&.}$$
  with $V\in \mathcal{U}\cap\mathcal{V},U\in \mathcal{U}$ and $f$ is a left $\mathcal{V}$-approximation of $P$. There exists a morphism $h:V\longrightarrow X$ such that $g=hf$. Now since $h$ is a deflation by Condition WIC, we get $X\in {\rm Defl}(\mathcal{U}\cap\mathcal{V})$. Therefore, we complete the proof.
\end{proof}

\begin{corollary}
  The bijection between support $\tau$-tilting subcategories and certain $\tau$-cotorsion pairs induces a bijection between tilting subcategories and certain cotorsion pairs.
\end{corollary}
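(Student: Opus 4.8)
The plan is to specialize the bijection of Theorem \ref{3.11} to the tilting situation, using Lemma \ref{3.4} as the bridge between support $\tau$-tilting subcategories and tilting subcategories. First I would observe that Lemma \ref{3.4} characterizes tilting subcategories precisely as those support $\tau$-tilting subcategories $\mathcal{T}$ for which condition (ST3) can be realized by an \emph{inflation} $f\colon P\to T^0$ for every $P\in\mathcal{P}(\mathscr{C})$; equivalently, by Lemma \ref{2.8}, for which the right exact $\mathbb{E}$-triangle in (ST3) is an actual conflation. So the task reduces to identifying, under the correspondence $\mathcal{T}\leftrightarrow({^{\bot_1}{\rm Defl}\mathcal{T}},{\rm Defl}\mathcal{T})$, which $\tau$-cotorsion pairs $(\mathcal{U},\mathcal{V})$ with $\mathcal{V}$ a torsion class correspond to tilting subcategories, and to check that these are exactly the cotorsion pairs.

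The key steps, in order, are as follows. (1) Suppose $\mathcal{T}$ is tilting. By Lemma \ref{3.4} it is support $\tau$-tilting, so Theorem \ref{3.11} gives the $\tau$-cotorsion pair $(\mathcal{U},\mathcal{V})=({^{\bot_1}{\rm Defl}\mathcal{T}},{\rm Defl}\mathcal{T})$ with $\mathcal{V}$ a torsion class. I would then verify this is a genuine cotorsion pair: condition (1) $\mathbb{E}(\mathcal{U},\mathcal{V})=0$ holds since $\mathcal{U}={^{\bot_1}\mathcal{V}}$; for condition (3), given $C\in\mathscr{C}$ take a conflation $\Omega C\to P\to C$ with $P$ projective and splice it with the conflation $P\to T^0\to T^1$ coming from (T3) (which exists because $\mathcal{T}$ is tilting, again via Lemma \ref{3.4} and Lemma \ref{2.8}) to produce, via $({\rm ET4})$, a conflation $C\to V_C\to U_C$ with $V_C\in{\rm Defl}\mathcal{T}=\mathcal{V}$ and $U_C\in\mathcal{U}$; condition (2) follows by a dual argument or directly from the fact that $\mathscr{C}$ has enough projectives together with $\mathcal{P}(\mathscr{C})\subseteq\mathcal{U}\cap\mathcal{V}$. (2) Conversely, if $(\mathcal{U},\mathcal{V})$ is a cotorsion pair with $\mathcal{V}$ a torsion class, then by Lemma \ref{3.8} it is a $\tau$-cotorsion pair, so Theorem \ref{3.11} produces the support $\tau$-tilting subcategory $\mathcal{T}=\mathcal{U}\cap\mathcal{V}$; I then use cotorsion-pair condition (3) applied to a projective $P$, namely the conflation $P\to V_P\to U_P$, which is already an honest conflation with $V_P\in\mathcal{U}\cap\mathcal{V}$ (using that $\mathcal{U}$ is closed under extensions, as in Lemma \ref{3.8}), to see that the inflation hypothesis in Lemma \ref{3.4} is met, whence $\mathcal{T}$ is tilting. (3) Finally I would note that these two assignments are mutually inverse because they are restrictions of the mutually inverse bijections of Theorem \ref{3.11}, and that the constructions in (1) and (2) show the image of the tilting subcategories under $\mathcal{T}\mapsto({^{\bot_1}{\rm Defl}\mathcal{T}},{\rm Defl}\mathcal{T})$ is exactly the set of cotorsion pairs $(\mathcal{U},\mathcal{V})$ with $\mathcal{V}$ a torsion class.

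The main obstacle I anticipate is step (1), condition (3) of the cotorsion pair: one must produce, for an \emph{arbitrary} object $C$, a conflation $C\to V_C\to U_C$ with $V_C\in{\rm Defl}\mathcal{T}$ and $U_C\in{^{\bot_1}{\rm Defl}\mathcal{T}}$, and the natural approach of pushing out the conflation $\Omega C\to P\to C$ along the inflation $P\to T^0$ coming from (T3) requires a careful application of $({\rm ET4})$ (or the $3\times3$ lemma, \cite[Proposition 3.15]{8}) to extract the relevant row; one then has to confirm the cone term lies in $\mathcal{U}$ using ${\rm pd}(\mathcal{T})\le 1$ and a dimension-shift via Proposition \ref{2.13}. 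The verification that $\mathcal{V}={\rm Defl}(\mathcal{U}\cap\mathcal{V})$ needed to close the loop is already available from the proof of Theorem \ref{3.11}, so no new argument is required there; likewise the closure properties of $\mathcal{U}$ and $\mathcal{V}$ are handled by Lemmas \ref{3.8} and \ref{3.9}. The remaining bookkeeping — checking that everything is compatible with direct summands and that the two restricted maps compose to the identity — is routine given Theorem \ref{3.11}.
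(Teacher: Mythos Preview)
Your overall strategy matches the paper's: restrict the bijection of Theorem \ref{3.11} using Lemma \ref{3.4} in one direction and Lemma \ref{3.8} in the other. Your argument for the converse direction (cotorsion pair $\Rightarrow$ tilting) is correct and essentially identical to the paper's. Your verification of cotorsion pair condition (3) via the $({\rm ET4})$-diagram built from $\Omega C\to P\to C$ and $P\to T^0\to T^1$ is also exactly what the paper does (their diagram (\ref{*})).

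The genuine gap is your treatment of cotorsion pair condition (2). The claim $\mathcal{P}(\mathscr{C})\subseteq\mathcal{U}\cap\mathcal{V}$ is false: projectives lie in $\mathcal{U}={^{\bot_1}{\rm Defl}\mathcal{T}}$ trivially, but there is no reason for $P\in{\rm Defl}\mathcal{T}$. (Already for a classical tilting module $T$ over an algebra $\Lambda$, one has $\Lambda\in{\rm Sub}\,T$ but typically $\Lambda\notin{\rm Gen}\,T$.) So the conflation $\Omega C\to P\to C$ does not directly witness condition (2), and there is no ``dual argument'' available since (T3) has no injective analogue. You also mis-identify the obstacle: in your condition-(3) construction the cone term is $T^1\in\mathcal{T}\subseteq\mathcal{U}$, so no dimension shift is needed there; the real work is in (2).

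The paper handles condition (2) by a two-step construction: first apply the condition-(3) construction to the syzygy $Y=\Omega X$, obtaining a conflation $Y\to W'\to T'^1$ with $W'\in{\rm Defl}\mathcal{T}$; then push out the conflation $Y\to P\to X$ along $Y\to W'$ to obtain $W'\to N\to X$. The middle column $P\to N\to T'^1$ shows $N$ is an extension of objects in ${^{\bot_1}{\rm Defl}\mathcal{T}}$, hence $N\in\mathcal{U}$, and $W'\in\mathcal{V}$, giving the required conflation. You should replace your sentence on condition (2) with this argument.
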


\begin{proof}
  Let $\mathcal{T}$ be a support tilting subcategory of $\mathscr{C}$. Similar to Lemma \ref{3.9}, ${^{\bot_1}{\rm Defl}\mathcal{T}}$ is closed under direct summands. By definition, it is clear that ${\rm Defl}\mathcal{T}$ is also closed under direct summands. For any $X\in \mathscr{C}$, there is a conflation $$Y\longrightarrow P\longrightarrow X$$ with $P\in \mathcal{P}(\mathscr{C})$ and a conflation
  $$P\longrightarrow T^0\longrightarrow T^1$$ with $T^0$,$T^1\in\mathcal{T}\subseteq{\rm Defl}\mathcal{T}$. Consider the commutative diagram of conflations
  \begin{equation}\label{*}
  \xymatrix{Y\ar[r] \ar@{=}[d]& P\ar[r]\ar[d]&X\ar[d]\\
  Y\ar[r]&T^0\ar[r]\ar[d]&W\ar[d]\\
  &T^1\ar@{=}[r]&T^1}
  \end{equation}
  where $X\longrightarrow W\longrightarrow T^1$ is a conflation with $W\in{\rm Defl}\mathcal{T}$ and $T^1\in\mathcal{T}\subseteq {^{\bot_1}{\rm Defl}\mathcal{T}}$. There exists a conflation $$Z\longrightarrow P'\longrightarrow Y$$ with $P'\in \mathcal{P}(\mathscr{C})$. Then we have a conflation
  $$P'\longrightarrow T'^0 \longrightarrow T'^1$$ with $T'^0$,$T'^1\in\mathcal{T}\subseteq{\rm Defl}\mathcal{T}$. Similar to (\ref{*}), we get a conflation $Y\longrightarrow W'\longrightarrow T'^1$ with $W'\in{\rm Defl}\mathcal{T}$ and $T'^1\in {^{\bot_1}{\rm Defl}\mathcal{T}}$. Consider the commutative diagram of conflations
  $$\xymatrix{Y \ar[r] \ar[d]& P\ar[r]\ar[d]&X\ar@{=}[d]\\
  W'\ar[r]\ar[d]&N\ar[r]\ar[d]&X\\
  T'^1\ar@{=}[r]&T'^1}$$
  where $W'\longrightarrow N\longrightarrow X$ is a conflation with $W'\in{\rm Defl}\mathcal{T}$ and $T'^1,P\in {^{\bot_1}{\rm Defl}\mathcal{T}}$. Clearly, $N\in{^{\bot_1}{\rm Defl}\mathcal{T}}$ by Proposition \ref{2.13}.
  So $({^{\bot_1}{\rm Defl}\mathcal{T}},{\rm Defl}\mathcal{T})$ is a cotorsion pair with ${\rm Defl}\mathcal{T}$ a torsion class by Lemma \ref{3.5}.

  Let $(\mathcal{U},\mathcal{V})$ be a cotorsion pair with $\mathcal{V}$ a torsion class, then $\mathcal{U}\cap\mathcal{V}$ is a support $\tau$-tilting subcategory of $\mathscr{C}$ by Lemma \ref{3.8} and Theorem \ref{3.11}. Then $\mathcal{U}\cap\mathcal{V}$ is a tilting subcategory of $\mathscr{C}$ by Lemma \ref{3.4} and the proof of Lemma \ref{3.8}.
\end{proof}


\end{document}